\theoremstyle{definition}
\newtheorem{defn}{Definition}[section]
\newtheorem{eg}[defn]{Example}
\theoremstyle{plain}
\newtheorem{thm}[defn]{Theorem}
\newtheorem{conj}[defn]{Conjecture}
\newtheorem{prop}[defn]{Proposition}
\newtheorem{lem}[defn]{Lemma}
\newcommand{\qb}[2]{\left[\begin{array}{c} #1 \\ #2  \end{array}\right]}
\newcommand{\s}{\sigma}
\newcommand{\Ga}{\Gamma}
\newcommand*{\pdash}{\rule[0.5ex]{1.5em}{1.5pt}}
\begin{document}
\title[Slopes for pretzel knots ]{Slopes for pretzel knots.}
\author[C. Lee]{Christine Ruey Shan Lee}
\author[R. van der Veen]{Roland van der
 Veen}

\address[]{Department of Mathematics, University of Texas, Austin TX 78712}
\email[]{clee@math.utexas.edu}

\address[]{Mathematisch Instituut, Leiden University, Leiden, Netherlands}
\email[]{r.i.van.der.veen@math.leidenuniv.nl}

\thanks{Lee was supported by NSF grant DMS 1502860. Van der Veen was supported by the Netherlands foundation for scientific research (NWO).}

\begin{abstract}Using the Hatcher-Oertel algorithm for finding boundary slopes of Montesinos knots, we prove the Slope Conjecture and the Strong Slope Conjecture for a family of 3-tangle pretzel knots. More precisely, we prove that the maximal degrees of the colored Jones polynomial of such knots determine a boundary slope as predicted by the Slope Conjecture, and that the linear term in the degrees correspond to the Euler characteristic of an essential surface. \end{abstract}
\maketitle

\section{Introduction}
Shortly after its invention the Jones polynomial was applied very successfully in knot theory. For example, it was the main tool in proving the Tait conjectures. After that many deeper connections to low-dimensional topology were uncovered while others remain conjectural and have little direct applications to questions in knot theory. With the Slope Conjecture, the Jones polynomial gives a new perspective on boundary slopes of surfaces in the knot complement. The conjecture provides many challenging and effective predictions about boundary slopes that cannot yet be attained by classical topology.

Precisely, the Slope Conjecture \cite{Gar11} states that the growth of the maximal degree
of $J_K(n;v)$ determines the boundary slope of an essential surface in the knot complement, see Conjecture \ref{conj:slopes}a. The conjecture has been verified for knots with up to 10 crossings \cite{Gar11}, alternating knots \cite{Gar11}, and more generally adequate knots \cite{FKP11, FKP13}.  Based on the work of \cite{DG12}, Garoufalidis and van der Veen proved the conjecture for 2-fusion knots \cite{GR14}. In \cite{KT15}, Kalfagianni and Tran showed that the set of knots satisfying the Slope Conjecture is closed under taking the $(p,q)$-cable with certain conditions on the colored Jones polynomial. They also formulated the Strong Slope Conjecture, see Conjecture \ref{conj:slopes}b, and verified it for adequate knots and their iterated cables, iterated torus knots, and a number of other examples. 

In this paper we prove the Slope Conjecture and the Strong Slope Conjecture for families of 3-string pretzel knots. This is especially interesting since many of the slopes found are non-integral. Our method is a comparison between calculations of the colored Jones polynomial based on knotted trivalent graphs and $6j$-symbols (called \emph{fusion} in \cite{GR14}), and the Hatcher-Oertel algorithm for Montesinos knots. Apart from providing more evidence for these conjectures, our paper is also a first step towards a more conceptual approach, which compares the growth of the degrees of the polynomial to data from curve systems on 4-punctured spheres.

The Slope Conjecture also provides an interesting way to probe more complicated questions such as the AJ conjecture \cite{FGL, GaChar}. According to the AJ conjecture, the colored Jones polynomial satisfies a q-difference equation that encodes the A-polynomial. The slopes of the Newton polygon of the A-polynomial are known to be boundary slopes of the knot \cite{CCGLS}. In this way the Slope Conjecture is closely related to the AJ conjecture \cite{Gar}. Of course the q-difference equation alone does not determine the colored Jones polynomial uniquely; in addition one would need to know the initial conditions or some other characterization. One way to pin down the polynomial would be to consider its degree and so one may ask: \emph{Which boundary slopes are selected by the the colored Jones polynomial?} We hope the present paper will provide useful data for attacking such questions.

We may also consider stabilization properties of the colored Jones polyomials such as heads and tails \cite{Arm}. Given an exact formula for the degree such as the one we write down, it is not hard to see what the tail looks like, but we do not pursue this in this paper.

\subsection{The Slope Conjectures}

For the rest of the paper, we consider a knot $K \subset S^3$. 

\begin{defn} An orientable and properly embedded surface $S \subset S^3 \setminus K$ is \emph{essential} if it is compressible, boundary-incompressible, and non boundary-parallel. If $S$ is non-orientable, then $S$ is \emph{essential} if its orientable double cover in $S^3\setminus K$ is essential in the sense defined above. 
\end{defn}

\begin{defn} Let $S$ be an essential and orientable surface with nonempty boundary in $S^3 \setminus K$. A fraction $p/q \in \mathbb{Q} \cup \{1/0\}$ is a \emph{boundary slope} of $K$ if $p\mu + q\lambda$ represents the homology class of $\partial S$ in $\partial N(K)$, where $\mu$ and $\lambda$ are the canonical meridian and longitude basis of $\partial N(K)$. The boundary slope of an essential non-orientable surface is that of its orientable double cover. 
\end{defn} 

The \emph{number of sheets}, $m$, of a properly embedded surface $S \subset S^3\setminus K$ is the number of times $\partial(S)$ intersects with the meridian circle of $\partial(N(K))$. 

For any $n\in \mathbb{N}$ we denote by $J_K(n;v)$ the unnormalized \emph{n-colored Jones polynomial} of $K$, see Section \ref{sec:CJPcomp}. Its value on the unknot is $\frac{v^{2n}-v^{-2n}}{v^{2}-v^{-2}}$
and the variable $v$ satisfies $v=A^{-1}$, where $A$ is the $A$-variable of the Kauffman bracket. Denote by $d_+ J_K(n)$ the maximal degree in $v$ of $J_K(n)$. Our variable $v$ is
the fourth power of that used in \cite{KT15}, thus absorbing superfluous factors of $4$. 

As a foundation for the study of the degrees of the colored Jones polynomial we apply the main result of \cite{GL05}
that says that the sequence of polynomials satisfies a $q$-difference equation (i.e. is $q$-holonomic). Theorem 1.1 of \cite{Gar11b} then implies that the degree
must be a quadratic quasi-polynomial, which may be formulated as follows.  

\begin{thm}\cite{Gar11b}\label{thm:quasip}\\
For every knot $K$ there exist integers $p_K,C_K \in \mathbb{N}$ and quadratic polynomials $Q_{K,1}\dots Q_{K,p_K}\in \mathbb{Q}[x]$ such that for all $n>C_K$ ,
\[d_+ J_K(n) = Q_{K,j}(n) \quad\text{if}\quad n = j\ (\mathrm{mod}\ p_K).\]
\end{thm}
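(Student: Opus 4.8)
The plan is to turn the abstract $q$-holonomicity of \cite{GL05} into a concrete combinatorial picture and then reduce the computation of $d_+$ to a parametric optimization problem for a quadratic function over a dilating family of polytopes. Concretely, any state-sum presentation of $J_K(n;v)$ — for instance the $6j$-symbol/fusion expansion used later in this paper, or the Kauffman-bracket state sum of a fixed diagram of $K$ — writes
\[ J_K(n;v) \;=\; \sum_{\mathbf{k}\,\in\,P_n\cap\mathbb{Z}^r} t(n,\mathbf{k}), \]
where $t(n,\mathbf{k})$ is a proper $q$-hypergeometric term in $n,k_1,\dots,k_r$ (a ratio of products of quantum factorials $[\,\cdot\,]!$ with affine-linear arguments, times a monomial $v^{L(n,\mathbf{k})}$ with $L$ of degree at most $2$), and $P_n\subset\mathbb{R}^r$ is a convex polytope cut out by finitely many inequalities affine-linear in $(n,\mathbf{k})$. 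In particular the vertices of $P_n$ are $\mathbb{Q}$-affine functions of $n$, so $\{P_n\}$ is, up to shearing and dilation by $n$, a single fixed family.

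First I would compute $d_+$ of a single summand. Since $d_+[m]!$ is a quadratic polynomial in $m$, and $L$ is at most quadratic, the function $\mathbf{k}\mapsto d_+ t(n,\mathbf{k})$ is the restriction to $P_n$ of a single quadratic polynomial $Q(n,\mathbf{k})\in\mathbb{Q}[n,k_1,\dots,k_r]$; moreover the coefficient of the top $v$-power of $t(n,\mathbf{k})$ is an explicit small nonzero integer (generically $\pm1$) on the interior of each relevant chamber. This already yields the upper bound $d_+ J_K(n)\le \max_{\mathbf{k}\in P_n} Q(n,\mathbf{k})$.

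Next I would solve the parametric optimization. Because the purely-quadratic part of $Q$ is independent of $n$, maximizing $Q(n,\cdot)$ over $P_n$ is governed, as in linear programming, by the vertices of $P_n$; evaluating $Q$ at these vertices — which are $\mathbb{Q}$-affine in $n$, hence contribute bounded denominators and therefore periodicities — expresses $\max_{\mathbf{k}\in P_n}Q(n,\mathbf{k})$ as the maximum of finitely many quadratic quasi-polynomials in $n$. For $n$ large, on each residue class modulo the least common multiple of the periods exactly one of these quasi-polynomials dominates, so the maximum is itself a quadratic quasi-polynomial $Q_{K,j}(n)$ for $n\equiv j\ (\mathrm{mod}\ p_K)$ once $n>C_K$. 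This handles the ``$\le$'' side of the statement with the correct quasi-polynomial shape.

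The main obstacle is upgrading this inequality to an equality, i.e.\ ruling out that the top-degree contributions of the various summands cancel, and doing so uniformly in $n$ so that a genuine finite quasi-period $p_K$ works for \emph{every} $n>C_K$ (not merely along a density-one subsequence). The plan here is to analyze the argmax locus $\{\mathbf{k}\in P_n : Q(n,\mathbf{k})=\max\}$: for large $n$ in a fixed residue class it is a face of $P_n$ of controlled dimension, and one must check that the leading coefficients of the terms supported on its lattice points do not sum to zero — either by isolating a unique optimal vertex and reading off its non-vanishing coefficient, or by a sign/specialization argument ($v\to\infty$, or adequacy-type positivity of the relevant extreme coefficients). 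An alternative route that partly sidesteps the combinatorics is to work with a $q$-difference operator annihilating $\bigl(J_K(n)\bigr)$ directly: its Newton polygon pins down the finitely many admissible quadratic growth rates, and an inductive analysis of the degree function on the solution space shows $d_+ J_K(n)$ realizes one of them with an eventually-periodic correction. Either way, controlling the cancellation uniformly in $n$ is the delicate step; the rest is bookkeeping with quantum factorials and with maxima of quadratics over polytopes.
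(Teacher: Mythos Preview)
The paper does not itself prove Theorem~\ref{thm:quasip}: it is quoted from \cite{Gar11b}, with the only argument given being the sentence preceding the statement, namely that $q$-holonomicity \cite{GL05} feeds into Theorem~1.1 of \cite{Gar11b}. So the ``paper's proof'' is precisely: invoke the $q$-difference equation for $\bigl(J_K(n)\bigr)_n$ and apply Garoufalidis' general theorem that the degree of a $q$-holonomic sequence is eventually a quadratic quasi-polynomial. Your ``alternative route'' in the last paragraph is essentially this, and that is the route that actually works.

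Your primary route, via a state sum and parametric maximization of $Q(n,\mathbf{k})$ over $P_n\cap\mathbb{Z}^r$, has a genuine gap at exactly the place you flag. Ruling out cancellation of the top-degree contributions is not a technicality one can expect to discharge uniformly for all knots: the very computations in Section~\ref{sec:CJPcomp} of this paper show that for pretzel knots the maximum of $\Phi$ is sometimes attained at two lattice points simultaneously, and one must check by hand that the leading signs agree; the authors explicitly restrict the family of pretzel knots precisely because in other ranges ``serious issues of possible and actual cancellations will continue to cloud the picture.'' In general nothing prevents the argmax face from carrying a signed sum that vanishes, and when it does the degree can drop by an amount that your optimization picture does not control. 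There is also a smaller slip earlier: the maximum of a \emph{quadratic} objective over a polytope is not ``governed by the vertices as in linear programming''; the real optimum can sit in the interior of a face, and rounding to the nearest lattice point already introduces the periodic corrections you want, but this needs to be argued, not asserted by analogy with the linear case.

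In short: your state-sum/optimization plan reproduces the upper bound and the \emph{shape} of the answer, but it cannot be completed to a proof of the exact equality for arbitrary $K$; the $q$-holonomic argument you mention at the end is the one the paper (via \cite{Gar11b}) actually relies on, and is what you should promote to the main line.
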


The slope conjectures predict that the coefficients of the polynomials $Q_{K,j}$ have a direct topological interpretation. 

\begin{conj} \label{conj:slopes} \ 
If we set $Q_{j,K}(x) = a_jx^2+2b_jx+c_j$, then for each $j$ there exists an essential surface $S_j \subset S^3 \setminus K$ such that
\begin{enumerate}
\item[a.](Slope Conjecture  \cite{Gar11})
$a_j$ is the boundary slope of $S_j$, 
\item[b.](Strong Slope Conjecture \cite{KT15})
Writing $a_j = \frac{x_j}{y_j}$ as a fraction in lowest terms we have 
$b_jy_j = \frac{\chi(S_j)}{|\partial S_j|}$,
where $\chi(S_j)$ is the Euler characteristic of $S_j$ and $|\partial S_j|$ is the number of boundary components.  
\end{enumerate}
\end{conj}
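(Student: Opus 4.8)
The plan is to establish Conjecture~\ref{conj:slopes} not in general (it is open) but for the explicit infinite family of $3$-string pretzel knots treated in this paper, by computing both sides of the predicted equalities independently and then matching them. On the Jones side, I would compute the unnormalized colored Jones polynomial $J_K(n;v)$ via the knotted-trivalent-graph / fusion formalism of \cite{GR14}: fuse the three rational tangles $1/q_i$ of the pretzel knot so that $J_K(n)$ becomes a single sum over an internal color $k$, $0\le k\le n$, of a product of $6j$-symbols together with framing and twist factors, one contribution per tangle. Using the known maximal $v$-degrees of the tetrahedral and theta networks and of the twist factors, each summand has an explicit degree that is quadratic in $n$ and piecewise linear in $k$. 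I would locate the value(s) of $k$ maximizing this degree and — the essential point — show that the top-degree coefficient of the summand(s) at the optimum does not cancel. This produces the quasi-polynomial $d_+J_K(n)=a_jn^2+2b_jn+c_j$ of Theorem~\ref{thm:quasip} with explicit $a_j,b_j$, and I expect genuine dependence on $n\bmod p_K$ coming from the two competing maxima at the ends of the $k$-summation.

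On the topological side, I would run the Hatcher--Oertel algorithm for boundary slopes of Montesinos knots on the same family: enumerate the candidate edgepath systems (edgepaths in the diagram attached to the tangles $1/q_i$ satisfying the completing/closing condition) that yield properly embedded essential surfaces, and for each, compute the boundary slope from the Hatcher--Oertel twisting/linking count, the Euler characteristic $\chi(S)$, and the number of boundary components $|\partial S|$ (equivalently, the number of sheets and the orientability status, passing to the double cover when $S$ is non-orientable). This yields a finite list of candidate slopes with their $(\chi,|\partial S|)$ data.

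The final step is the comparison. I would check that for each residue class $j\bmod p_K$ the number $a_j$ extracted from the colored Jones polynomial occurs as the boundary slope of one of the Hatcher--Oertel surfaces $S_j$, and that, writing $a_j=x_j/y_j$ in lowest terms, the linear coefficient satisfies $b_jy_j=\chi(S_j)/|\partial S_j|$. Since the state sum typically has two maxima, I anticipate needing (at least) two Hatcher--Oertel surfaces, one realizing each slope appearing in the quasi-polynomial. Throughout, I would fix the normalization conventions at the outset ($v=A^{-1}$ and the factor-of-$4$ rescaling relative to \cite{KT15}) to avoid propagating sign and factor errors.

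The main obstacle lives on both sides. On the Jones side it is the non-cancellation argument: the $6j$-symbols carry alternating signs from quantum integers, so proving that the putative top-degree term survives requires either isolating a unique dominant summand or showing that the sum of leading coefficients over the tied summands is a nonzero quantum-integer expression. On the topological side it is the bookkeeping in the Hatcher--Oertel algorithm — correctly distinguishing edgepath systems that give genuinely essential surfaces from those giving compressible or boundary-parallel ones, and handling the orientability/double-cover correction so that the $\chi$ and $|\partial S|$ entering Conjecture~\ref{conj:slopes}b are the right ones. Reconciling these two combinatorial computations so that the degree data and the surface data agree on the nose is where the substance of the argument will lie.
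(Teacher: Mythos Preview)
Your high-level plan matches the paper's: compute $d_+J_K(n)$ from a fusion/KTG state sum, run Hatcher--Oertel, and compare. But several concrete features of the Jones-side computation are misdescribed, and if you followed your plan as written it would not go through.

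First, the state sum for $P(\tfrac1r,\tfrac1s,\tfrac1t)$ is not a single sum over one internal color $k\in[0,n]$. Fusing the three tangles produces a sum over \emph{three} even colors $a,b,c\in[0,2n]$ subject to the triangle inequality (Lemma~\ref{lem.CJP}), and the degree of each summand is a piecewise \emph{quadratic} function $\Phi(a,b,c,n)$, not piecewise linear. The maximization is therefore over lattice points in a 3-dimensional polytope; the paper reduces it by proving monotonicity in each variable (using the hypotheses on $r,s,t$) to force the maximum onto the face $a=b+c$, then onto the edge $b+c=2n$, and only then is it a one-variable quadratic.

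Second, the periodicity $p_K$ does not come from ``two competing maxima at the ends of the $k$-summation.'' In case~(2) the restricted quadratic has its real maximum at $m=\tfrac{2n(t-1)-s+t}{-2+s+t}$ in the interior of the edge, and $p_K=\tfrac{-2+s+t}{2}$ arises from rounding $m$ to the nearest \emph{even} lattice point; this rounding depends on $n\bmod p_K$ and affects only the constant term $c_j$. In particular the leading coefficient $a_j$ is the same for every residue class, so you need exactly one Hatcher--Oertel surface per case, not two.

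Third, the non-cancellation issue is more specific than you indicate. When $m$ is an odd integer there are two tied lattice maxima $m\pm1$; one must check their leading coefficients have the same sign. The paper does this by tracking the sign $(-1)^{(ar+bs+ct)/2}$ of each summand and using that $s+t$ is even on the line $a=b+c$, $c=2n-b$. This is a short parity check, not a quantum-integer identity.

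On the Hatcher--Oertel side your description is accurate; the paper exhibits explicit edgepath systems, checks incompressibility via the $r$-value criterion, and computes $\chi(S)/m$ directly, matching $b_j$ via $m=|\partial S|\,y_j$.
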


The numbers $a_j$ are called the \emph{Jones slopes} of the knot $K$. Our formulation of the Strong Slope Conjecture is a little sharper than the original. According to the formulation in \cite{KT15}, the surface $S_j$ may be replaced with $S_i$ for some $1\leq i\leq p_K$ not necessarily equal to $j$. 
For all examples known to the authors, the polynomials $Q_{K,j}$ all have the same leading term, so it is not yet possible to decide which is the correct statement. 

For completeness sake one may wonder about the constant terms $Q_{K,j}(0)$. It was speculated by Kalfagianni and the authors that perhaps we have: $Q_{K,j}(1)=0$ for each $j$. This surely holds in simple cases where one may take $p_K=1, C_K=0$, but not for the more complicated pretzel knot cases we will describe. Perhaps the constant term does have a topological interpretation that extends the slope conjectures further.

\subsection{Main results} \label{subsec:main}

Recall that a Montesinos knot $K(\frac{p_1}{q_1},\frac{p_2}{q_2}, \ldots,\frac{p_n}{q_n})$ is a sum of rational tangles \cite{conway}. As such both the colored Jones polynomial and the boundary slopes are more tractable than for general knots yet still highly non-trivial. 
When it is put in the \emph{standard form} as in Figure \ref{fig:stdform}, a Montesinos knot is classified by ordered sets of fractions 
$\left(\frac{\beta_1}{\alpha_1} \mod 1, \ldots, \frac{\beta_r}{\alpha_r} \mod 1 \right)$, considered up to cyclic permutation and reversal of order \cite{Bon79}. 
Here $e$ is the number indicated below when the Montesinos knot is put in the standard form as shown in Figure \ref{fig:stdform}. 
\begin{figure}[ht]
\includegraphics[scale=.7]{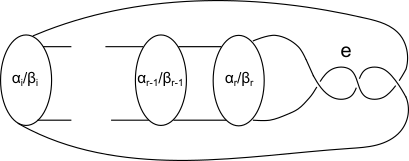}
\caption{A Montesinos knot in standard form. \label{fig:stdform}}
\end{figure} 

Moreover, a Montesinos knot is semi-adequate if it has more than 1 positive tangles or more than 1 negative tangles \cite{LT88}.   
Since the slope conjectures were settled for semi-adequate knots \cite{FKP11, FKP13, KT15}, we may restrict our attention to Montesinos knots with
exactly one negative tangle. The simplest case for which the results are not known are when there are three tangles in total.
For convenience we make further assumptions on the shape of the tangles. First we require the fractions to be $(\frac{1}{r}, \frac{1}{s}, \frac{1}{t})$, so that our knot is a pretzel knot
written $P(\frac{1}{r}, \frac{1}{s}, \frac{1}{t})$, and we assume $r<0<s,t$. An example of it is shown in Figure \ref{fig:pretzel}. For technical reasons, we restrict our family of pretzel knots a little more 
so that we can obtain the following result: 

\begin{thm}
\label{thm:slope} Conjecture \ref{conj:slopes} is true for the pretzel knots $P(\frac{1}{r}, \frac{1}{s},\frac{1}{t})$
where $r < -1 < 1 < s, t$, and $r, s, t$ odd in the following two cases: 
\begin{enumerate}
\item $2|r|<s,t$
\item  $|r|>s$ or $|r|>t$.
\end{enumerate}
\end{thm}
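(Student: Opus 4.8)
The proof proposal consists of two independent computations that must be shown to agree. First, for the colored Jones side, I would use the fusion/$6j$-symbol formula for the colored Jones polynomial of a Montesinos knot — presenting $P(\tfrac1r,\tfrac1s,\tfrac1t)$ as a knotted trivalent graph obtained by fusing three twist regions — to obtain an explicit (multisum) formula for $J_K(n;v)$. The maximal $v$-degree $d_+J_K(n)$ is then extracted by a careful dominant-term analysis of the sum: one identifies, as a function of $n$, which internal summation indices maximize the degree, checks that the corresponding top coefficient does not cancel, and reads off a quadratic quasi-polynomial $Q_{K,j}(n)=a_jn^2+2b_jn+c_j$. The two case hypotheses $2|r|<s,t$ and $|r|>s$ or $|r|>t$ are exactly what is needed to make the location of this dominant term stable and to rule out cancellation, so the sign conditions on $r,s,t$ enter here.

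\medskip

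\textbf{The topological side.} In parallel I would run the Hatcher--Oertel algorithm for boundary slopes of Montesinos knots on $P(\tfrac1r,\tfrac1s,\tfrac1t)$. This means enumerating the candidate edgepaths in the diagram of the three rational tangles, assembling them into closed edgepath systems that give essential (branched) surfaces, and computing for each the boundary slope and the Euler characteristic $\chi(S)$ and number of boundary components $|\partial S|$. Because $r,s,t$ are odd, $\tfrac1r,\tfrac1s,\tfrac1t$ have odd denominators, which constrains the possible edgepaths; again the two case hypotheses pin down which edgepath system realizes the relevant slope. From this one obtains, for each residue $j \bmod p_K$, a specific essential surface $S_j$ with a computed slope and a computed $\chi(S_j)/|\partial S_j|$.

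\medskip

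\textbf{Matching.} The final step is to check that the slope of $S_j$ equals the leading coefficient $a_j$ from the degree formula, and that $b_jy_j$ (with $a_j=x_j/y_j$ in lowest terms) equals $\chi(S_j)/|\partial S_j|$. This is a direct comparison of the two explicit rational-function-in-$(r,s,t)$ expressions produced above, done separately in each of the two cases and, within a case, for each congruence class of $n$ modulo the period $p_K$ (which will itself be a small explicit function of $r,s,t$). One also verifies the quasi-polynomial really has period $p_K$ as claimed, i.e. that the degree formula genuinely depends on $n \bmod p_K$ and not more finely.

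\medskip

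\textbf{Main obstacle.} I expect the hard part to be the degree computation on the colored Jones side: the fusion formula is a multi-dimensional sum of $q$-binomial and $6j$-symbol terms with signs, so proving that the naive maximal-degree term actually survives — that there is no cancellation among the several index tuples achieving the maximum — is delicate and is where the technical restrictions in the two cases are really being spent. The Hatcher--Oertel computation is algorithmic but bookkeeping-heavy; the genuine subtlety is organizing the edgepath enumeration so that the resulting slopes and Euler characteristics come out as clean closed-form expressions in $r,s,t$ that can be compared term-by-term with the Jones data. A secondary point requiring care is confirming essentiality (incompressibility and boundary-incompressibility) of the surfaces the algorithm outputs, and, when the relevant surface is non-orientable, passing correctly to the orientable double cover in both the slope and the $\chi/|\partial|$ bookkeeping.
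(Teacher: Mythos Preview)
Your proposal is correct and follows essentially the same approach as the paper: the argument splits into a colored Jones degree computation via the fusion/$6j$-symbol state sum and a Hatcher--Oertel edgepath computation, with the two case hypotheses used exactly as you anticipate to pin down the dominant summand and rule out cancellation. One small simplification you will discover is that $a_j$ and $b_j$ turn out to be independent of $j$ in both cases, so a single essential surface suffices rather than one per residue class, and the non-orientable double-cover issue does not in fact arise.
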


\begin{eg} For the knot $K=P(\frac{1}{-5}, \frac{1}{5}, \frac{1}{3} )$, the first three colored Jones polynomials are
\[
J_K(1;v)=1,
\]
\[
J_K(2;v)= v^{-34} + v^{-26} - v^{-22} - v^{-14} - v^{-10} + 2 v^2 + v^{10},
\]
\[
J_K(3;v)=v^{-100} + v^{-88} - v^{-84} - 2v^{-80} + v^{-76} - 3v^{-68} + 2v^{-60} - 
v^{-52} + v^{-48}+ \]
\[ 2v^{-44} + 3v^{-32} - v^{-24} + v^{-20} - v^{-16} - 2v^{-12} 
- v^{-8} + v^{-4} - v^4 + v^{12} - v^{20} + v^{24} + 2 v^{28}.
\]
In this case $p_K = 3$, notice the $2$ as a leading coefficient, this occurs for any $n$ divisible by $3$.
A table of the maximal degree of the first 13 colored Jones polynomials is more informative:
\begin{center}
\begin{tabular}{ c|*{13}{c} } 
 \hline
$n$ & 1  & 2  & 3  & 4  & 5   & 6   & 7   & 8   & 9   & 10  & 11  & 12  & 13 \\ 
\hline
$d_+J_K(n;v)$ & 0  & 10 & 28 & 62 & 104 & 154 & 220 & 294 & 376 & 474 & 580 & 694 & 824 \\  
 \hline
\end{tabular}
\end{center}
When $n=0 \mod 3$, the maximal degree $d_+J_K(n) = \frac{16}{3}n^2-6n-2$, and otherwise $d_+J_K(n) = \frac{16}{3}n^2-6n+\frac{2}{3}$.
So $a_j = \frac{16}{3}, b_j=-3$ and $c_0 = -2$, while $c_1,c_2 = \frac{2}{3}$.  

All these are matched by an essential surface of boundary slope $16/3$, 
a single boundary component, $3$ sheets, and Euler characteristic $-9$. 

\begin{figure}[ht]
\centering 
\includegraphics[scale=.2]{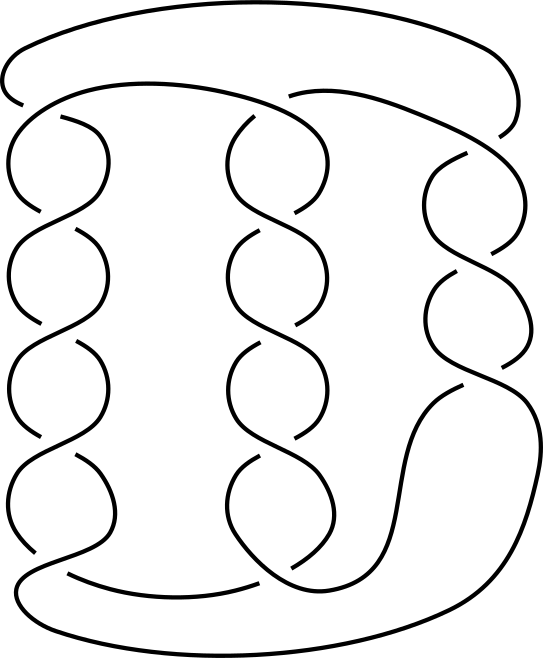}
\caption{\label{fig:pretzel} Pretzel knot $P(-\frac{1}{5}, \frac{1}{5}, \frac{1}{3})$.}

\end{figure} 
\end{eg}

The proof of our theorem follows directly from the two theorems below. The first dealing with the colored Jones polynomial and the second with essential surfaces.

\begin{thm} \label{thm:slope1} Assume $r,s,t$ are odd,  $r<-1<1<s,t$, and $K=P(\frac{1}{r},\frac{1}{s},\frac{1}{t})$.
\begin{enumerate}
\item When $2|r| < s, t$ we have $p_K=1$ and $Q_{K,1}(n) = -2n+2$.
\item When $|r|>s$ or $|r|>t$ we have $p_K = \frac{-2+s+t}{2}$ and \[Q_{K,j} = 2\left(\frac{(1-st)}{-2+s+t} -r\right)n^2+2(2+r)n+c_j,\]
where $c_j$ is defined as follows. Assuming $0\leq j < \frac{-2+s+t}{2}$ set $v_j$ to be the (least) odd integer nearest to $\frac{2(t-1)j}{-2+s+t}$. Then 
\[c_j = \frac{-6 + s + t}{2} - \frac{2 j^2 (t-1)^2}{-2 + s + t}+2 j (t-1)v_j+\frac{2 - s - t}{2}v_j^2.\]
\end{enumerate}
\end{thm}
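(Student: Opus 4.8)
The plan is to compute the maximal $v$-degree of $J_K(n)$ directly from a fusion/$6j$-symbol expansion of the colored Jones polynomial of the pretzel knot $P(\frac1r,\frac1s,\frac1t)$. First I would set up the colored Jones polynomial as a state sum: present $K$ as a knotted trivalent graph obtained by fusing three twist regions, so that $J_K(n;v)$ becomes a sum over an internal color $k$ (ranging over admissible values $0\le k\le n-1$, or the appropriate even/odd parity set) of a product of three "twist region" contributions — each a single-variable evaluation governing the $r$-, $s$-, and $t$-twists — times a tetrahedral ($6j$-symbol) coefficient and theta-graph normalizations. The key input is an exact formula, or at least sharp upper and lower bounds, for the $v$-degree of each building block: the twisted contribution for $a$ negative crossings and $b$ positive crossings in a twist region colored $(n,n,k)$, and the degree of the $6j$-symbol $\left[\begin{array}{c} n,n,k \\ n,n,k\end{array}\right]$ and of $\theta(n,n,k)$. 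These are standard (they appear in \cite{GR14} and in the Habiro-type formulas for the colored Jones polynomial); I would record them as lemmas in the next section.

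Second, with the degree of the $k$-th summand written as an explicit function $D_n(k)$ — which will be piecewise quadratic in $k$, with the pieces depending on the sign of $r$ vs. $s,t$ because the dominant contribution to a twist region switches according to whether the internal color is "large" or "small" — I would maximize $D_n(k)$ over the admissible range of $k$. In case (1), $2|r|<s,t$, I expect the maximum to be achieved at the extreme value $k=n-1$ (the twists on the positive tangles dominate so strongly that pushing $k$ to its top value wins), the maximizer is unique for all large $n$, there is no cancellation of the top coefficient, and the resulting degree is affine in $n$: $d_+J_K(n)=-2n+2$, giving $p_K=1$. In case (2), $|r|>s$ or $|r|>t$, the optimum is attained at an interior value $k=k_n$ which grows linearly in $n$ with an irrational-looking slope $\tfrac{2(t-1)}{-2+s+t}$ (coming from balancing the $r$-region against the $s,t$-regions). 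Rounding $k_n$ to the nearest admissible (odd) integer introduces the quasi-periodic correction: writing $n\equiv j\pmod{p_K}$ with $p_K=\tfrac{-2+s+t}{2}$, the nearest admissible color is $k_n = \tfrac{2(t-1)(n-j)}{-2+s+t}+v_j$ for the integer $v_j$ defined in the statement, and substituting this into the quadratic $D_n(k)$ yields $Q_{K,j}(n)=2\big(\tfrac{1-st}{-2+s+t}-r\big)n^2+2(2+r)n+c_j$ with exactly the stated $c_j$. I would verify the period is $\tfrac{-2+s+t}{2}$ by checking that the fractional part of $k_n$ has that period and that no two distinct residues give the same quadratic.

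Third — and this is where real care is needed — I must check that the maximal-degree term in the top summand is \emph{not} cancelled by other summands. The summands $D_n(k)$ for $k$ near the maximizer have $v$-degrees that differ by $O(1)$ (not growing with $n$), so a priori the leading coefficient of the top power of $v$ could vanish. I would handle this by computing the actual leading coefficient of $J_K(n)$ as a finite sum of leading coefficients of the near-maximal summands (a bounded number of terms, governed by the second difference of $D_n$), and showing it is nonzero — this is the role of the example's observation that a coefficient like $2$ can appear as a leading coefficient for $n\equiv 0$. Concretely this reduces to checking that a small explicit alternating sum of products of $q$-binomial leading coefficients and signs does not vanish, which I expect to be a short but delicate computation; the parity hypotheses ($r,s,t$ odd) and the inequality constraints are exactly what is needed to keep the maximizer in a controlled location and to make this sum manifestly nonzero. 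The main obstacle, then, is not the degree bookkeeping (which is mechanical once the building-block lemmas are in place) but this non-vanishing of the leading coefficient together with pinning down the precise rounding constant $v_j$ uniformly in $j$; I would isolate both into self-contained lemmas before assembling the proof.
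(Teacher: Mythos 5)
Your overall strategy (fusion state sum, maximize the degree of the summands, quasi-periodicity from rounding the real maximizer to the lattice, then worry about cancellation) is the right one and matches the paper's in spirit, but there is a structural gap in your setup. A $3$-tangle pretzel knot is not a double-twist or $2$-bridge knot: fusing the two strands in each of the three twist regions introduces \emph{three independent} internal colors $a,b,c$ (one per twist region), coupled by the admissibility of the triple $(a,b,c)$ at the two fusion vertices as well as of each $(a,n,n)$. The state sum is therefore a triple sum over the lattice points of a $3$-dimensional polytope (a cone intersected with a cube), not a single sum over one color $k$ shared by all three regions; no single-sum formula of the kind you describe exists here. This matters because the entire content of the degree computation is an optimization over that polytope: one first shows the degree function $\Phi(a,b,c,n)$ is increasing in $a$ and decreasing in $b,c$, forcing the maximum onto the face $a=b+c$; in case (1) the restricted quadratic then decreases in both $b$ and $c$ so the maximum sits at the origin (not at the top color, as you guessed), giving degree $-2n$; in case (2) it increases, pushing the maximum to the edge $b+c=2n$, and only there does one meet the one-variable negative-definite quadratic whose real maximizer has slope $\tfrac{2(t-1)}{-2+s+t}$ in $n$. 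With a single color you cannot set up this reduction, and the slope and period you quote would have to be taken on faith rather than derived.

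The second gap is the non-cancellation argument. You correctly flag it as the delicate point, but the mechanism you propose (summing leading coefficients of all summands within an $O(1)$ window of the top degree) is both heavier than necessary and not actually carried out. What is needed, and what suffices, is: (i) uniqueness of the lattice argmax of the degree function except for a possible tie between two adjacent lattice points on the edge $b+c=2n$ (occurring exactly when the real maximizer is an odd integer), and (ii) an explicit sign formula for the leading coefficient of each summand, namely $C(-1)^{(ar+bs+ct)/2}$ with $C$ independent of $a,b,c$, which on the face $a=b+c$, $c=2n-b$ is constant in $b$ precisely because $s+t$ is even. That parity computation is where the hypothesis that $s,t$ are odd enters, and without it the claim that the top coefficient does not vanish is unsupported. (The leading coefficient $2$ in the example comes exactly from such a two-point tie with equal signs, not from a larger window of near-maximal terms.) I would also note that your period verification should be phrased in terms of the even-lattice rounding of $m=\tfrac{2n(t-1)-s+t}{-2+s+t}$, which is where the odd integer $v_j$ in the statement of $c_j$ actually originates.
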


\begin{thm} \label{thm:slope2} Under the same assumptions as the previous theorem:
\begin{enumerate}
\item When $2|r| < s, t$ there exists an essential surface $S$ of $K$ with boundary slope $0=\frac{0}{1}$, and
\[\frac{\chi(S)}{|\partial S|} = -1 .\] 
\item When $|r| > s$ or $|r| > t$ there exists an essential surface with boundary slope $2\left(\frac{(1-st)}{-2+s+t} -r\right) = \frac{x_j}{y_j}$ (reduced to lowest terms), and 
\[ \frac{\chi(S)}{y_j \cdot |\partial S|} = 2+r.\] 
\end{enumerate}  
\end{thm}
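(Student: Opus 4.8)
The plan is to run the Hatcher--Oertel algorithm, which produces and classifies the essential surfaces in a Montesinos knot complement in terms of \emph{edgepath systems}. For $K=P(\frac{1}{r},\frac{1}{s},\frac{1}{t})$ each of the three tangles $\frac{1}{r},\frac{1}{s},\frac{1}{t}$ contributes an edgepath $\gamma_i$ in the Hatcher--Oertel diagram, starting at the vertex $\langle \frac{1}{q_i}\rangle$ with $q_i\in\{r,s,t\}$ and proceeding monotonically (nonincreasing denominators along edges), the three edgepaths being required to end at points with a common first coordinate and to satisfy the cycle condition balancing the total twisting. Each admissible system $\Sigma$ carries a candidate surface $S_\Sigma$ over a minimal number $m_\Sigma$ of sheets, and the algorithm supplies (i) a formula for the boundary slope of $S_\Sigma$ as a signed sum of its edge contributions measured against a reference (Seifert-type) system, (ii) formulas for $\chi(S_\Sigma)$ and $|\partial S_\Sigma|$, and (iii) criteria guaranteeing that $S_\Sigma$ is incompressible and $\partial$-incompressible. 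So the plan has two parts: first exhibit, in each of the two cases, the system $\Sigma$ realizing the predicted data, and then run the bookkeeping above.

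First I would identify $\Sigma$. In case (1), where $2|r|<s,t$, I expect the selected surface to be the shortest admissible system --- each $\gamma_i$ reduced essentially to a single edge ending at $\langle 0\rangle$ --- carrying a spanning surface with boundary slope $0$, one boundary component and $\chi=-1$; one checks that the matching and cycle conditions for this system hold precisely because $2|r|<s,t$, and that no competing edgepath system intervenes. In case (2), where $|r|>s$ or $|r|>t$, I expect $\gamma_r$ (the edgepath of the negative tangle) to be the long edgepath running all the way from $\langle \frac{1}{r}\rangle$ to $\langle \infty\rangle$, while $\gamma_s$ and $\gamma_t$ stop short, each ending in a partial edge whose endpoint has first coordinate equal to that of the terminus of $\gamma_r$; the inequality $|r|>s$ or $|r|>t$ is exactly what makes such a configuration admissible and pins down the common endpoint, hence the partial-edge parameters and, in particular, the minimal sheet number $m_\Sigma$, which should work out to the denominator $y_j$ of the Jones slope of Theorem~\ref{thm:slope1}(2). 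The surface here is necessarily non-orientable, consistent with the non-integral slope.

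With $\Sigma$ in hand the rest is computation. I would first evaluate the Hatcher--Oertel boundary-slope formula for $\Sigma$: sum the positive and negative edge contributions and subtract those of the reference system; the arithmetic should collapse to $0$ in case (1) and to $2\!\left(\frac{1-st}{-2+s+t}-r\right)$ in case (2), matching $a_j$ from Theorem~\ref{thm:slope1}. Then I would read off $\chi(S_\Sigma)$ as the signed sum of the contributions of the full edges, the partial edges, and the terminal vertices of the $\gamma_i$, read off $|\partial S_\Sigma|$ from the cycle and endpoint data, observe that the number of sheets is $y_j\,|\partial S_\Sigma|$, and simplify. The targets are $\chi(S)/|\partial S|=-1$ in case (1) and $\chi(S)/\bigl(y_j\,|\partial S|\bigr)=\chi(S)/m_\Sigma=2+r$ in case (2); since $r$ is odd and $<-1$ this last quantity is negative, as it should be.

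The main obstacle is the verification that $S_\Sigma$ is essential: incompressible, $\partial$-incompressible, non-$\partial$-parallel, and in particular not a surface lying in a Conway sphere. Here one must invoke the Hatcher--Oertel incompressibility criteria: the edgepaths are minimal and non-backtracking by construction, but one still has to check the conditions on the ``$r$-values'' at $\langle\infty\rangle$ and rule out compressions coming through the individual tangles and through the exterior of the tangle decomposition, and it is precisely in discharging these cases that the hypotheses $r<-1<1<s,t$, the parity of $r,s,t$, and the sharp inequalities ($2|r|<s,t$ in case (1); $|r|>s$ or $|r|>t$ in case (2)) get used. I expect this verification --- together with the delicate bookkeeping of the partial edges, where sign and off-by-one errors are easy, especially near the boundary $2|r|=s$ or $|r|=s$ where the two cases meet --- to be the crux; once $\Sigma$ is correctly pinned down, the slope and Euler characteristic computations are mechanical.
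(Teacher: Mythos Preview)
Your overall strategy is the paper's: exhibit explicit Hatcher--Oertel edgepath systems, read off boundary slope and $\chi/m$ from them, and verify essentiality via the $r$-value criterion. Case~(1) is essentially right --- the surface is the Seifert surface carried by the system in which each $\gamma_i$ is the single edge $\langle \frac{1}{q_i}\rangle\text{\pdash}\langle 0\rangle$ (though note this system is admissible and essential for \emph{any} odd $r<-1<1<s,t$; the inequality $2|r|<s,t$ is not what makes it so, and ``no competing edgepath system intervenes'' is irrelevant since only existence is claimed).

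In case~(2), however, your choice of $\gamma_r$ is wrong and would derail the argument. The edgepath $\gamma_r$ does \emph{not} terminate at $\langle\infty\rangle=\langle 1/0\rangle$: in the Hatcher--Oertel setup, if one edgepath ends at $\langle 1/0\rangle$ then all of them must, and for three-tangle pretzel knots such systems carry only compressible or redundant surfaces and are discarded at the outset. In particular there is no way for $\gamma_s,\gamma_t$ to ``stop short'' at a point with the same horizontal coordinate as $\langle 1/0\rangle$. The correct $\gamma_r$ is the long path $\langle\frac{1}{r}\rangle\text{\pdash}\langle\frac{1}{r+1}\rangle\text{\pdash}\cdots\text{\pdash}\langle -1\rangle$ coming from the continued fraction $[+2,\ldots,+2]$, but itself \emph{truncated at a partial edge} $\langle\frac{-1}{q+1}\rangle\text{\pdash}\langle\frac{-1}{q}\rangle$; the paths $\gamma_s,\gamma_t$ are partial single edges on $\langle\frac{1}{s}\rangle\text{\pdash}\langle 0\rangle$ and $\langle\frac{1}{t}\rangle\text{\pdash}\langle 0\rangle$, and all three endpoints share a common horizontal coordinate strictly between $0$ and $1$. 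Condition~(E4) then pins down the partial-edge parameters --- one finds, for instance, $B=\frac{t-1}{-2+s+t}$, exactly the quantity appearing in the Jones computation --- and from there the slope and $\chi/m=2+r$ follow mechanically. You also overestimate the essentiality check: it is not the crux but a one-line application of the Hatcher--Oertel $r$-value criterion, using only $|r|,s,t\ge 3$; the specific inequalities $2|r|<s,t$ and $|r|>s$ or $|r|>t$ play no role there.
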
 

The exact same proofs work when $r$ is even and $s, t$ are odd. In other cases additional complications may arise. 
Coming back to the interpretation of the constant terms $c_j$, the above expressions make it clear that they cannot be determined by $a_j$ and $b_j$ alone. It seems an interesting challenge to
find a topological interpretation of the $c_j$. For more complicated knots it is likely (but unknown) that the periodic phenomena that we observe in the $c_j$ will also occur in the coefficients $a_j,b_j$.  

The main idea of the proof of Theorem \ref{thm:slope1} is to write down a state sum and consider the maximal degree of each summand in the state sum. If one is lucky only one single term in the state sum will have maximal degree. In that case the maximal degree of that summand is the maximal degree of the whole sum. The maximal degree of each term happens to be a piecewise quadratic polynomial, so the problem comes down to maximizing a polynomial over the lattice points in a polytope. As soon as there are multiple terms attaining the maximum things get more complicated. This is the reason for not considering all pretzel knots or even Montesinos knots. Similar results can be obtained at least for the remaining pretzel knots with 3 tangles, but not without considerable effort to control the potential cancellations between terms. Different tools are needed to give a satisfactory proof of the general case.

For Theorem \ref{thm:slope2}, the Hatcher-Oertel algorithm works in more general settings. However, exhibiting a surface with the specified Euler characteristic and boundary components may not be so simple in general.

The organization of the paper is as follows: In Section \ref{sec:CJPcomp}, we describe the computation for the degree of the colored Jones polynomial, which will determine an exact formula for its degrees and prove Theorem \ref{thm:slope1}. We describe the Hatcher-Oertel algorithm as it suits our purpose in Section \ref{sec:HO}. In Section \ref{sec:verifying}, we prove Theorem \ref{thm:slope2} by applying the algorithm and describing the boundary slopes corresponding to the Jones slopes. Finally, possible generalizations are discussed in Section \ref{sec:discussion}. 
\newline

\noindent {\bf Acknowledgement.} We would like to thank Stavros Garoufalidis, Efstratia Kalfagianni and Anh Tran for several stimulating conversations, as well as the organizers at KIAS for providing excellent working conditions during the First Encounter to Quantum Topology: School and Workshop Conference in Seoul, Korea.

\section{Colored Jones polynomial} \label{sec:CJPcomp}

In this section we define the colored Jones polynomial, give an example of how it can be computed, and give a lower bound for its maximal degree.

\subsection{Definition of colored Jones polynomial using Knotted Trivalent Graphs}

Knotted trivalent graphs (KTGs) provide a generalization of knots that is especially suited for introducing the colored Jones polynomial in an intrinsic way.

\begin{defn} \
\begin{enumerate}
\item A framed graph is a 1-dimensional simplicial complex $\Gamma$ together with an embedding $\Gamma\to \Sigma$ of $\Gamma$ into a surface with boundary $\Sigma$ as a spine.
\item A coloring of $\Gamma$ is a map $\sigma:E(\Gamma)\to \mathbb{N}$, where $E(\Gamma)$ is the set of edges of $\Gamma$. 
\item A Knotted Trivalent Graph (KTG) is a a trivalent framed graph embedded (as a surface) into $\mathbb{R}^3$, considered up to isotopy.
\end{enumerate}
\end{defn}

\begin{figure}[ht]
\begin{center}
\includegraphics[width=\linewidth]{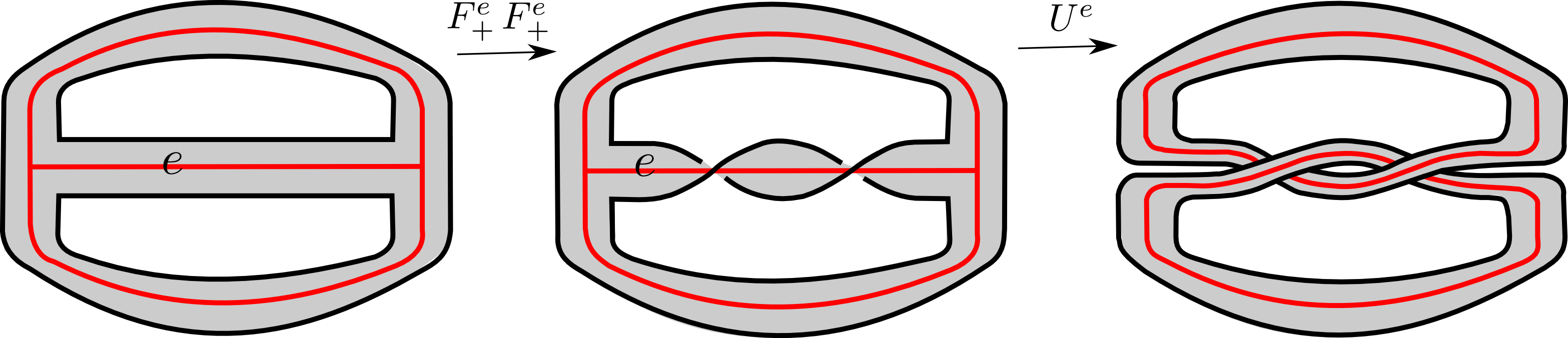}
\end{center}
\caption{\label{fig.Hopf} 
The KTG $\Theta$ (left) and the Hopf link as a KTG (right). Two framing changes followed by an unzip on the middle edge turn the Theta into the Hopf link.}
\end{figure}

A fundamental example of a KTG is the planar theta graph $\Theta$ shown in Figure \ref{fig.Hopf} on the left. It has two vertices and three edges that are embedded in the two holed disk. Framed links are special cases of KTGs with no vertices, see for example the Hopf link $H$ in Figure \ref{fig.Hopf} on the right. The reason we prefer the more general set of KTGs is the rich 3-dimensional operations that they support. In the figure we see an example of how the link $H$ arises from the theta graph by simple operations that are described in detail below.

The first operation on KTGs is called a \emph{framing change} denoted by $F^e_{\pm}$. It cuts the surface $\Sigma$ transversal to an edge $e$, rotates one side by $\pi$
and reglues. The second operation is called \emph{unzip}, $U^e$. It doubles a chosen edge along its framing, deletes its end-vertices and
joins the result as shown in Figure \ref{fig.KTGMoves}. The final operation is called $A^w$ and expands a vertex $w$ into a triangle as shown in Figure \ref{fig.KTGMoves}.
The result after applying an operation $M$ to KTG $\Ga$ will be denoted by $M(\Ga)$. For example, the Hopf link can be presented as $U^e(F^e_+(F^e_+(\Theta)))$.

\begin{figure}[ht]
\begin{center}
\includegraphics[width=\linewidth]{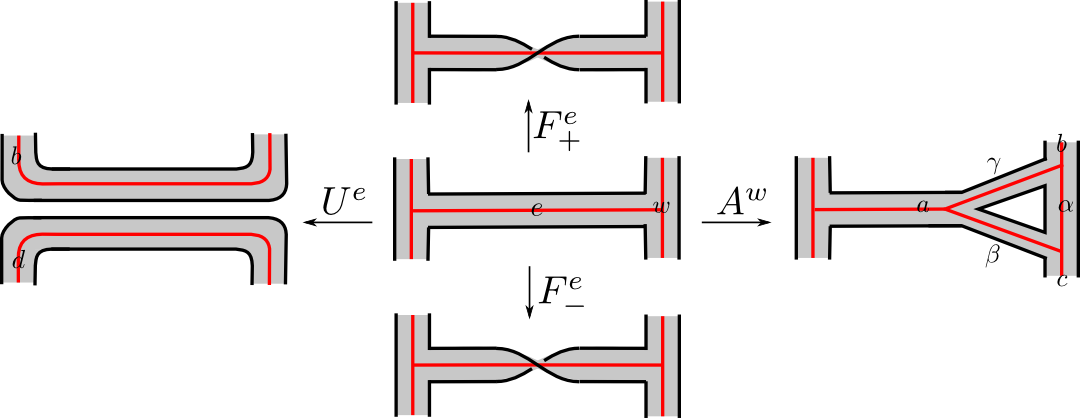}
\end{center}
\caption{\label{fig.KTGMoves} Operations on Knotted Trivalent Graphs: framing change $F_{\pm}$, unzip $U$, and triangle move $A$ applied to an edge $e$ and vertex $w$ shown in the middle.}
\end{figure}

These operations suffice to produce any KTG from the theta graph as was shown by D. Thurston \cite{ThuKTG}, see also \cite{Vdv}.
\begin{prop}
Any KTG can be generated from $\Theta$ by repeatedly applying the three operations $F_\pm,U$ and $A$ defined above.
\end{prop}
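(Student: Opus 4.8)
The plan is to start from a generic diagram of the given KTG $\Gamma$ and to dismantle it one crossing at a time, reducing in the end to a crossingless planar trivalent graph that is visibly assembled from $\Theta$.

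First I would fix a generic planar diagram $D$ of $\Gamma$ and record on each edge the integer measuring the difference between its framing and the blackboard framing. Since $F^e_\pm$ changes the framing of a single edge by a half-twist, it is enough to construct the blackboard-framed graph underlying $D$ together with all of its crossings; the prescribed framing is then installed at the very end by finitely many framing changes. So I may assume all framings blackboard.

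The heart of the argument is an induction on the number of crossings of $D$. The local model is the Hopf link computation recorded in Figure \ref{fig.Hopf}: framing changes on the middle edge of $\Theta$ followed by an unzip of that edge produce a clasp out of a crossingless configuration. Reading this picture in a small disk around a crossing of $D$, I would show that $D$ is obtained from a diagram $D'$ with strictly fewer crossings by one or two framing changes on a short edge $e$ followed by the unzip $U^e$; when there is no vertex near the crossing to begin with --- as happens for the components of a link --- the short edge is first created by an $A^w$ move (equivalently, the crossing is ``un-resolved'' into a small theta-graph that is unzipped again at the corresponding step). Iterating, one expresses $D$ through the allowed operations in terms of a crossingless diagram $D_0$.

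It then remains to treat the crossingless case. Now $D_0$ is a disjoint union of unknotted circles together with connected blackboard-framed trivalent planar graphs, and the task is to obtain each connected trivalent planar graph $G$ from $\Theta$ using only the planar moves $A$ and $U$. I would argue by induction on the number of vertices: if $G\neq\Theta$, I would locate a small face or a bridge of $G$ such that unzipping an appropriate edge near it --- after a preparatory $A$-move if necessary --- keeps the graph connected and trivalent while strictly decreasing the vertex count, then drive $G$ down to $\Theta$ and run the sequence backwards. This last combinatorial step, namely showing that a vertex-reducing unzip is always available and handling bridges, nugatory loops and multi-edges, is where essentially all of the work sits, together with the careful verification that the local move of the previous paragraph really produces the desired crossing and leaves the rest of the diagram untouched; the framing bookkeeping of the first step is routine.
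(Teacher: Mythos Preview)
The paper does not actually prove this proposition: it is stated with a reference to D.~Thurston's preprint \cite{ThuKTG} (and the second author's \cite{Vdv}) and then used as a black box. So there is no ``paper's own proof'' to compare against; you have supplied more than the paper does.

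Your outline is in the spirit of the standard argument: replace each crossing of a diagram by a short edge between two trivalent vertices to obtain a planar trivalent graph $G$, build $G$ from $\Theta$ using $A$ and $U$, then recreate each crossing by applying $F_\pm$ to the short edge and unzipping it, and finally fix framings with further $F_\pm$'s. Two points deserve care if you want this to stand on its own. First, the phrase ``the short edge is first created by an $A^w$ move'' is misleading as written, since $A^w$ needs an existing vertex $w$; what you really do is pass to the auxiliary planar graph $G$ in which every crossing has already been replaced by a two-vertex ``node'', and it is in $G$ that the vertices are available---the crossing is then produced forwards by $F$ followed by $U$ on that node. Second, your inductive reduction of a connected planar trivalent graph to $\Theta$ by $A$ and $U$ is exactly where the content lies, and you correctly flag it; one also has to say how disconnected targets (split links, several components) arise, since $A$ preserves connectedness and $U$ is the only move that can split components. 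None of this is addressed in the present paper, which simply imports the result.
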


In view of this result, the colored Jones polynomial of any KTG is determined once we fix the value of any colored theta graph and describe how it
changes when applying any of the KTG operations.

\begin{defn}
The colored Jones polyomial of a KTG $\Gamma$ with coloring $\s$, notation $\langle \Ga,\s\rangle$, is defined by the following four equations explained below.
\begin{equation}\langle\Theta\ a,b,c\rangle = O^{\frac{a+b+c}{2}}\qb{\frac{a+b+c}{2}}{\frac{-a+b+c}{2},\frac{a-b+c}{2},\frac{a+b-c}{2}},\end{equation}
\begin{equation}
\langle F^e_{\pm}(\Ga),\s \rangle = f(\s(e))^{\pm1}\langle \Ga,\s \rangle,
\end{equation}
\begin{equation}
\langle U^e(\Ga),\s \rangle = \langle \Gamma,\s \rangle\sum_{\s(e)}\frac{O^{\s(e)}}{\langle\Theta\ \s(e),\s(b),\s(d)\rangle}, 
\end{equation} and
\begin{equation}
\langle A^w(\Ga),\s \rangle = \langle \Ga,\s \rangle\Delta(a,b,c,\alpha,\beta,\gamma).
\end{equation}
As noted above, a $0$-framed knot $K$ is a special case of a $KTG$. In this case we denote its colored Jones polynomial by $J_K(n+1) = (-1)^n\langle K,n \rangle$, where $n$ means the single edge has color $n$. The extra minus sign is to normalize the unknot as $J_O(n) = [n]$.
\end{defn}

To explain the meaning of each of these equations we first set $[k] =\frac{v^{2k}-v^{-2k}}{v^2-v^{-2}}$ and $[k]! = [1][2]\dots[k]$ for $k\in \mathbb{N}$ and $[k]! = 0$ if $k\notin \mathbb{N}$.
Now the symmetric multinomial coefficient is defined as:
\[\qb{a_1+a_2+\dots+a_r}{a_1,a_2,\dots, a_r} = \frac{[a_1+a_2+\dots+a_r]!}{[a_1]!\dots[a_r]!}.\]
In terms of this, the value of the $k$-colored ($0$-framed) unknot is $O^k = (-1)^k[k+1] = \langle O, k\rangle$, and the above formula for the theta graph whose edges are colored $a,b,c$ includes a quantum trinomial.
Next we define 
\begin{align*}
& \Delta(a,b,c,\alpha,\beta,\gamma) = \\ 
& \sum_{z}\frac{(-1)^z}{(-1)^{\frac{a+b+c}{2}}} \qb{z+1}{\frac{a+b+c}{2}+1}\qb{\frac{-a+b+c}{2}}{z-\frac{a+\beta+\gamma}{2}}
\qb{\frac{a-b+c}{2}}{z-\frac{\alpha+b+\gamma}{2}}\qb{\frac{a+b-c}{2}}{z-\frac{\alpha+\beta+c}{2}}.
\end{align*}
The formula $\Delta$ is the quotient of the $6j$-symbol and a theta, the summation range for $\Delta$ is finite as dictated by the binomials. Finally, we define 
\[f(a) = i^{-a}v^{\frac{-a(a+2)}{2}}.\]
This explains all the symbols used in the above equations. In the equation for unzip the sum is taken over all possible colorings of the edge $e$ that was unzipped. All other edges
are supposed to have the same color before and after the unzip. Again this results in a finite sum since the only values that may be non-zero are when $\s(e)$ is between $|\s'(b)-\s'(d)|$ and $\s'(b)+\s'(d)$ and has the same parity. Finally in the equation for $A$,  the colors of the six edges involved in the $A$ operation are denoted $a,\alpha,b,\beta,c,\gamma$ as shown in Figure \ref{fig.KTGMoves}.

The above definition agrees with the integer normalization used in \cite{Cost}. It was shown there that $\langle \Ga, \s \rangle$ is a Laurent polynomial in $v$ and does not depend on the
choice of operations we use to produce the KTG. As a relatively simple example,  the reader is invited to verify that the colored Jones polynomial of the Hopf link $H$ whose components are colored $a,b$
is given by the formula $\langle H, a,b \rangle = \sum_c f(c)^2 \frac{O^c}{\langle \Theta\ a,b,c \rangle}\langle{\Theta\ a,b,c}\rangle = (-1)^{a+b}[(a+1)(b+1)].$
The first equality sign follows directly from reading Figure \ref{fig.Hopf} backwards.

The above definition may appear a little cumbersome at first sight, but it is more three-dimensional and less dependent on knot diagrams and produces concise formulas for Montesinos knots.
For example,  the colored Jones polynomial of the $0$-framed Pretzel knot is given in the following lemma.  

\begin{lem}
\label{lem.CJP}
For $r,s,t$ odd, the colored Jones polynomial of the pretzel knot $P(\frac{1}{r},\frac{1}{s},\frac{1}{t})$ is given by 
\begin{align*}
& J_{P(\frac{1}{r},\frac{1}{s},\frac{1}{t})}(n+1) = \\ 
& (-1)^n\sum_{a,b,c}\frac{O^aO^bO^cf(a)^rf(b)^sf(c)^t\langle \Theta\ a,b,c\rangle}{\langle\Theta\ a,n,n\rangle \langle\Theta\ b,n,n\rangle \langle\Theta\ c,n,n\rangle}\Delta(a,b,c,n,n,n)^2.
\end{align*}
Here the sum is over all even $0 \leq a,b,c\leq 2n$ 
that satisfy the triangle inequality (this comes from $\langle \Theta\ a,b,c\rangle$). Also each non-zero term in the sum has leading coefficient $C(-1)^{\frac{ar+bs+ct}{2}}$ for some $C\in \mathbb{R}$ independent of $a,b,c$.
\end{lem}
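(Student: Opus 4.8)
The plan is to present the $0$-framed pretzel $K=P(\tfrac1r,\tfrac1s,\tfrac1t)$ as a knotted trivalent graph built from $\Theta$ by the operations $A$, $F_\pm$, $U$, and then to read off $\langle K,n\rangle$ directly from the defining equations (1)--(4). Start with $\Theta$ whose three edges carry the colors $a,b,c$, and apply the triangle move $A$ at each of its two vertices, giving the six new triangle edges the color $n$ (this is forced, since they become arcs of the $n$-colored knot). The result is the triangular prism graph --- two $(n,n,n)$-colored triangles joined by three edges colored $a,b,c$ --- and by equations (1) and (4) its invariant is $\langle\Theta\ a,b,c\rangle\,\Delta(a,b,c,n,n,n)^2$. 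Next change the framing of the edge colored $a$ so as to introduce the factor $f(a)^r$ (apply $F^a_-$ exactly $|r|$ times, since $r<0$), and similarly introduce $f(b)^s$ and $f(c)^t$; by (2) this multiplies the invariant by $f(a)^rf(b)^sf(c)^t$. Finally unzip the edges $a$, $b$, $c$: since the two edges meeting each of them at either endpoint are triangle edges of color $n$, equation (3) contributes $\sum_a \frac{O^a}{\langle\Theta\ a,n,n\rangle}$ and likewise for $b,c$. Inspecting the resulting diagram shows that the three doubled, twisted edges are the three twist regions of the pretzel and the six triangle edges close them up cyclically, so the KTG is $P(\tfrac1r,\tfrac1s,\tfrac1t)$; substituting the product of all these factors into $J_K(n+1)=(-1)^n\langle K,n\rangle$ gives the stated formula. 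The summation range is then forced: $\langle\Theta\ a,n,n\rangle$ forces $a$ even with $0\le a\le 2n$, and likewise for $b,c$; $\langle\Theta\ a,b,c\rangle$ forces the triangle inequality; a term outside the support of the binomials in $\Delta$ vanishes.

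For the claim about leading coefficients, expand each factor of a summand in its highest power of $v$. Every symmetric quantum multinomial has leading coefficient $1$, and $O^k=(-1)^k[k+1]$ has top coefficient $(-1)^k$, so $\langle\Theta\ x,y,z\rangle$ has top coefficient $(-1)^{(x+y+z)/2}$. Since $a,b,c$ are even, $O^aO^bO^c$ has top coefficient $+1$, and the $(a+b+c)$-dependent signs from $\langle\Theta\ a,b,c\rangle$ in the numerator and from $\langle\Theta\ a,n,n\rangle\langle\Theta\ b,n,n\rangle\langle\Theta\ c,n,n\rangle$ in the denominator cancel, leaving only the $(a,b,c)$-independent sign $(-1)^{3n}=(-1)^n$. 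Again because $a,b,c$ are even, $ar+bs+ct$ is even, so $f(a)^rf(b)^sf(c)^t$ is $(-1)^{(ar+bs+ct)/2}$ times a power of $v$, contributing the sign $(-1)^{(ar+bs+ct)/2}$. Finally $\Delta(a,b,c,n,n,n)^2$ is a perfect square, so its leading coefficient is a nonnegative real; a short analysis of the binomial sum defining $\Delta$ shows that $\Delta(a,b,c,n,n,n)$ itself has leading coefficient $\pm1$, so $\Delta^2$ has leading coefficient $+1$. Collecting everything, the leading coefficient of each non-zero summand is $(-1)^n(-1)^{(ar+bs+ct)/2}$, i.e. of the claimed form $C(-1)^{(ar+bs+ct)/2}$ with $C=(-1)^n$ independent of $a,b,c$.

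The main obstacle is the first step: making the KTG-to-pretzel dictionary precise, especially the framings. One must verify that the twist regions produced by the three unzips carry $r,s,t$ signed half-twists and close up to $P(\tfrac1r,\tfrac1s,\tfrac1t)$ itself rather than its mirror or a different pretzel, and that the framing of $K$ inherited from $\Theta$ through the $A$-moves, framing changes, and unzips is the $0$-framing; any discrepancy would introduce an extra factor $f(n)^{\pm w}$ and spoil the formula, so the choices of $F_\pm$ (and, if needed, a compensating framing change on the outer edge) must be made carefully. By comparison the leading-coefficient bookkeeping is essentially mechanical once the conventions for $[k]$, $O^k$ and $f$ are fixed; the one point there that needs a small argument is that $\Delta(a,b,c,n,n,n)$ has leading coefficient $\pm1$, which is exactly what makes $C$ independent of $a,b,c$.
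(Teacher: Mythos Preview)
Your proposal is correct and follows essentially the same route as the paper: build the pretzel from $\Theta$ by two $A$-moves (producing the prism), framing changes on the three rung edges, and three unzips, then read off the formula from equations (1)--(4); the leading-coefficient bookkeeping is the same as the paper's, and your observation that $\Delta^2$ is a perfect square with leading coefficient $1$ matches the paper's one-line assertion. The one point the paper handles that you flag as an obstacle is the framing: the paper notes that for general $r,s,t$ one must insert the correction factor $f(n)^{-2\mathrm{Wr}(r,s,t)-2r-2s-2t}$ with $\mathrm{Wr}(r,s,t)=-(-1)^{rst}((-1)^r r+(-1)^s s+(-1)^t t)$, and for $r,s,t$ all odd this exponent is zero, so no correction is needed---this resolves your concern about the inherited framing being the $0$-framing.
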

\begin{proof}
The exact same formula and proof works for general $r,s,t$ except that we only get a knot when at most one of them is odd and have to correct the framing by adding the term
$f(n)^{-2\mathrm{Wr}(r,s,t)-2r-2s-2t}$ where the writhe is given by $\mathrm{Wr}(r,s,t)=-(-1)^{rst}((-1)^r r+(-1)^s s + (-1)^t t)$.
In Figure \ref{fig.PretzelKTG} we illustrate the proof for the pretzel knot $K=P(\frac{1}{3},\frac{1}{1},\frac{1}{2})$, the general case is similar.
The first step is to generate our knot $K$ from the theta graph by KTG moves. One way to achieve this is shown in the figure. To save space we did not explicitly draw the framed bands
but instead used the blackboard framing. The dashes indicate half twists when blackboard framing is not available or impractical. The exact same sequence of moves will produce any pretzel knot, one just needs to adjust the number of framing changes accordingly. Note that the unzip applied to a twisted edge produces two twisted bands that form a crossing. This is natural considering that the black lines stand for actual strips.
Reading backwards and applying the above equations, we may compute $J_K(n+1)$ as follows. The unzips yield three summations, the framing change multiplies everything by the factors $f$, the $A$ moves both yield the same labeling and hence a $\Delta^2$ and the final theta completes the formula.

To decide the leading coefficient of the terms in the sum corresponding to $0\leq a,b,c\leq 2n$ we see that the unknots contribute $(-1)^{a+b+c}$, the $f$-terms multiply this by $i^{ar+bs+ct}$ and something independent of $a,b,c$ and the thetas contribute $(-1)^{a+b+c+3n}$. The $\Delta^2$ must have leading coefficient $1$.

\begin{figure}[ht]
\begin{center}
\includegraphics[width=\linewidth]{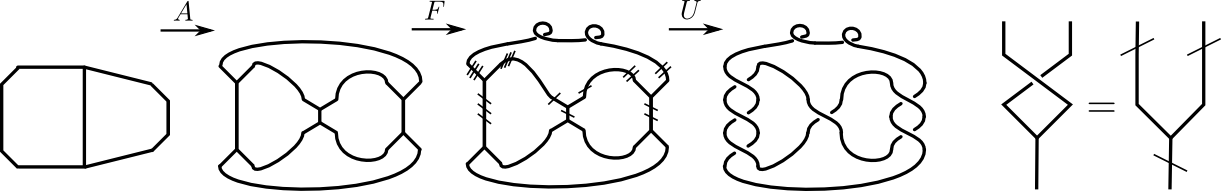}
\end{center}
\caption{\label{fig.PretzelKTG}
Starting from a theta graph (left), we first apply the $A$-move to both vertices, next change the framing on many edges (half twists in the edge bands are denoted by a dash), and finally unzip the vertical edges to obtain a $0$-framed diagram for the pretzel knot $P(\frac{1}{3},\frac{1}{1},\frac{1}{2})$. The crossings arise from the half twists using the isotopy shown on the far right.}
\end{figure}
\end{proof}

\subsection{The degree of the colored Jones polynomial}

Now that we defined the colored Jones polynomial of a KTG and noted that it is always a Laurent polynomial, we may consider its maximal degree.

\begin{defn}
Denote by $d_+\langle \Ga,\s \rangle$ the highest degree in $v$ of $\langle \Ga, \s \rangle$.
\end{defn}

The highest order term in the four equations defining the colored Jones polynomial of KTGs yields a lot of information on the behaviour of $d_+$.
We collect this information in the following lemma whose proof is elementary.

\begin{lem} \label{lem:destimate}
\begin{equation}
d_+\langle \Theta \ a,b,c\rangle = a(1-a)+b(1-b)+c(1-c)+\frac{(a+b+c)^2}{2}, 
\end{equation}
\begin{equation}
d_+\langle F^e_{\pm}(\Ga),\s \rangle = \pm d_+f(\s(e))\langle \Ga,\s \rangle, 
\end{equation}
\begin{equation}
d_+\langle U^e(\Ga),\s \rangle \geq d_+\langle \Gamma,\sƒ \rangle + \max_{\s(e)} d_+O^{\s(e)}-d_+\langle\Theta\ \s(e),\s(b),\s(d)\rangle, 
\end{equation}
and
\begin{equation}
d_+\langle A^w(\Ga),\s \rangle = d_+\langle \Ga,\s \rangle+d_+\Delta(a,b,c,\alpha,\beta,\gamma). 
\end{equation}
\end{lem}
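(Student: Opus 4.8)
The plan is to reduce all four estimates to the degree behaviour of the building blocks $[k]$, $[k]!$, the symmetric multinomials and $O^k$, and then track how $d_+$ interacts with products, quotients and sums. First I would record the elementary facts. Since $[k]=\frac{v^{2k}-v^{-2k}}{v^{2}-v^{-2}}$ is a Laurent polynomial whose top term is $v^{2(k-1)}$ with \emph{unit} leading coefficient, we get $d_+[k]=2(k-1)$ and hence $d_+[k]!=\sum_{j=1}^{k}2(j-1)=k(k-1)$. Because every factor has a unit leading coefficient, no cancellation occurs in products or in the multinomial quotient, so $d_+\qb{N}{a_1,\dots,a_r}=d_+[N]!-\sum_i d_+[a_i]! = N^2-\sum_i a_i^2$ (using $N=\sum_i a_i$), and $d_+O^k=d_+[k+1]=2k$. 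Equation (5) then follows by substitution: with $N=\frac{a+b+c}{2}$ and $a_1=\frac{-a+b+c}{2},a_2=\frac{a-b+c}{2},a_3=\frac{a+b-c}{2}$, the theta is the product $O^{N}\qb{N}{a_1,a_2,a_3}$, so $d_+\langle\Theta\ a,b,c\rangle = 2N+(N^2-a_1^2-a_2^2-a_3^2)$, and a routine expansion of the three squares rewrites this as $a(1-a)+b(1-b)+c(1-c)+\frac{(a+b+c)^2}{2}$.

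Equations (6) and (8) are the two exact equalities and are immediate. For (6), $f(\s(e))^{\pm1}=i^{\mp\s(e)}v^{\mp\s(e)(\s(e)+2)/2}$ is a single power of $v$ up to a unit, and multiplying a nonzero Laurent polynomial by such a monomial shifts its top degree by exactly the degree of the monomial; hence $d_+\langle F^e_{\pm}(\Ga),\s\rangle=\pm d_+f(\s(e))+d_+\langle\Ga,\s\rangle$, which is (6). For (8), $\langle A^w(\Ga),\s\rangle=\langle\Ga,\s\rangle\,\Delta(a,b,c,\alpha,\beta,\gamma)$ is a genuine product of nonzero Laurent polynomials, and $d_+$ is additive on products, so $d_+\langle A^w(\Ga),\s\rangle=d_+\langle\Ga,\s\rangle+d_+\Delta(a,b,c,\alpha,\beta,\gamma)$ with no further work.

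The one estimate that is an inequality, (7), is where the care lies, since the unzip produces a genuine sum $\langle U^e(\Ga),\s\rangle=\sum_{\s(e)}\langle\Ga,\s\rangle\,O^{\s(e)}/\langle\Theta\ \s(e),\s(b),\s(d)\rangle$ over the admissible range $|\s(b)-\s(d)|\le\s(e)\le\s(b)+\s(d)$ of matching parity. For each fixed $\s(e)$ the summand $T_{\s(e)}$ is a quotient, so by multiplicativity and the quotient rule for $d_+$ its top degree is exactly $d_+\langle\Ga,\s\rangle+d_+O^{\s(e)}-d_+\langle\Theta\ \s(e),\s(b),\s(d)\rangle$, the quantity being maximized on the right of (7). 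To get a \emph{lower} bound I would pick a value $\s(e)^\ast$ maximizing this degree and show that the leading term of $T_{\s(e)^\ast}$ is not cancelled by the other summands; the total degree is then at least the degree of that surviving term. Concretely, since the sum is a Laurent polynomial, the coefficient of the top power $v^{D}$ (where $D$ is the maximum summand degree) equals the sum of the leading coefficients of those $T_{\s(e)}$ attaining $D$, and (7) is precisely the assertion that this does not vanish. I expect this non-cancellation to be the main obstacle, and it is exactly why only $\ge$ is claimed: the free inequality for a sum runs the other way, $d_+(\sum_i T_i)\le\max_i d_+T_i$. Two structural facts established above organize the argument: substituting (5), the explicit contribution $d_+O^{\s(e)}-d_+\langle\Theta\ \s(e),\s(b),\s(d)\rangle=\frac{\s(e)^2}{2}+(1-\s(b)-\s(d))\s(e)+\text{const}$ is a \emph{convex} quadratic in $\s(e)$, so the maximum over the finite range is attained at an endpoint; and the leading coefficient of $T_{\s(e)}$ carries the sign $(-1)^{(\s(e)-\s(b)-\s(d))/2}$ coming from $O^{\s(e)}$ and $\langle\Theta\rangle$, which flips each time $\s(e)$ increases by $2$ and so governs whether a tie at the maximal degree can cancel. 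The bookkeeping to rule out such top-degree cancellation among maximizing terms is the only non-formal point; everything else is substitution and the additivity of $d_+$ on products and quotients.
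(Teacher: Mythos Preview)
Your treatment of (5), (6) and (8) is exactly the elementary computation the paper has in mind, and there is nothing to add there.

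For (7) you are working too hard, because the inequality sign in the displayed statement is a typo: the paper intends $\le$, not $\ge$. This is clear from the sentence immediately following the lemma (``Note the inequality sign, since we cannot guarantee the leading terms will not cancel out. However for the inequality to be strict, \emph{at least two} terms have to attain the maximum.'') and from how the lemma is actually used in the proof of Theorem~\ref{thm.JonesDeg}, where one writes $d_+J\le \max_{a,b,c}\Phi$ and then upgrades to equality by showing the maximizer is unique. With $\le$, equation (7) is nothing more than the trivial fact $d_+\bigl(\sum_i T_i\bigr)\le \max_i d_+T_i$, combined with additivity of $d_+$ over the product/quotient appearing in each summand; no non-cancellation argument is needed at this stage. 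Indeed, as you yourself note, $\ge$ together with the automatic $\le$ would force equality, contradicting the paper's explicit warning that cancellation can occur.

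Your instinct that ruling out top-degree cancellation is ``the only non-formal point'' is correct but misplaced: that analysis is not part of this lemma at all. It is deferred to Theorem~\ref{thm.JonesDeg}, where it is carried out case by case for the specific pretzel state sum; ties do in fact occur there, which is why Lemma~\ref{lem.CJP} separately records the sign $(-1)^{(ar+bs+ct)/2}$ of the leading coefficient of each summand.
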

Here $d_+f(a) = -a(a+2)/2$ and $d_+O(a) = 2a$. The maximum is taken over $|\s(b)-\s(d)|\leq \s(e)\leq \s(b)+\s(d)$. Note the inequality sign, since we cannot guarantee the leading terms will not cancel out. However for the inequality to be strict, \emph{at least two} term have to attain the maximum.
Finally,
\begin{align}
d_+\Delta(a,b,c,\alpha,\beta,\gamma) &= g(m+1,\frac{a+b+c}{2}+1)+g(\frac{-a+b+c}{2},m-\frac{a+\beta+\gamma}{2}) \notag \\
&+g(\frac{a-b+c}{2},m-\frac{\alpha+b+\gamma}{2})+g(\frac{a+b-c}{2},m-\frac{\alpha+\beta+c}{2}),  
\end{align}
where $g(n,k) = 2k(n-k)$ and $2m = a+b+c+\alpha+\beta+\gamma-\max(a+\alpha,b+\beta,c+\gamma)$

Applying Lemma \ref{lem:destimate} to the formula of Lemma \ref{lem.CJP} for pretzel knots yields the following theorem:

\begin{thm}
\label{thm.JonesDeg}
Suppose $r,s,t$ are odd,  
\[
d_+J_{P(\frac{1}{r},\frac{1}{s},\frac{1}{t})}(n) = \begin{cases} -2n+2, &\mbox{if } s,t>-2 r\\
 2(\frac{1-st}{-2 + s + t}-r)n^2+  2(2 + r)n+c_n, &\mbox{if } s<-r \mbox{ or }  t < -r \end{cases}\]
where $c_n$ is defined as follows. Let $0\leq j < \frac{-2+s+t}{2}$ be such that $n = j \mod \frac{-2+s+t}{2}$ and
set $v_j$ to be the (least) odd integer nearest to $\frac{2(t-1)j}{-2+s+t}$. Then 
\[c_n = \frac{-6 + s + t}{2} - \frac{2 j^2 (t-1)^2}{-2 + s + t}+2 j (t-1)v_j+\frac{2 - s - t}{2}v_j^2\]
\end{thm}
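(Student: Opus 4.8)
The plan is to start from the explicit state sum in Lemma \ref{lem.CJP} and feed it into the degree estimates of Lemma \ref{lem:destimate}, so that the maximal degree of each summand becomes an explicit function of the summation variables $a,b,c$ (and the color $n$). Concretely, each term in the sum over $a,b,c$ is a product of the factors $O^aO^bO^cf(a)^rf(b)^sf(c)^t\langle\Theta\,a,b,c\rangle$ in the numerator, three thetas $\langle\Theta\,a,n,n\rangle$, $\langle\Theta\,b,n,n\rangle$, $\langle\Theta\,c,n,n\rangle$ in the denominator, and $\Delta(a,b,c,n,n,n)^2$. Using $d_+f(a)=-a(a+2)/2$, $d_+O(a)=2a$, the formula for $d_+\langle\Theta\,a,b,c\rangle$, and the $\Delta$-degree formula (with the $\max(a+\alpha,b+\beta,c+\gamma)$ term handled by case analysis on which of $a,b,c$ is largest — note $\alpha=\beta=\gamma=n$ here), I would assemble a function $\delta(a,b,c;n)$ that is piecewise quadratic in $(a,b,c)$, the pieces being cut out by the various $\max$ and absolute-value expressions. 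The key subtlety that Lemma \ref{lem:destimate} flags is that $d_+$ of the whole sum is only $\geq$ the max of $\delta$ over the lattice points in the relevant polytope, with equality \emph{unless} two or more terms tie for the maximum degree \emph{and} their leading coefficients cancel. So the heart of the argument is (i) solve the optimization problem $\max\delta(a,b,c;n)$ over the admissible lattice polytope, and (ii) show that at the maximizer(s) there is no cancellation.

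For step (i), I expect the optimization to decouple or nearly decouple: after simplification $\delta$ should be concave (or concave on each linear piece) in each variable separately, so the continuous maximum lies either at a vertex of the polytope or at an interior critical point, and one reads off the optimal $a,b,c$ as affine functions of $n$. This is where the two hypotheses on $r,s,t$ enter. When $2|r|<s,t$ the negative tangle dominates and I expect the optimum to be forced to the corner $a=b=c=0$ (or some similarly degenerate configuration), giving the linear answer $-2n+2$ with $p_K=1$. When $|r|>s$ or $|r|>t$, the optimum sits at an interior point whose coordinates are rational in $n$; the denominator $-2+s+t$ appearing in the statement is precisely the denominator forced by solving the linear system for the critical point, which is why $p_K=\frac{-2+s+t}{2}$: the lattice-point maximizer is the nearest even integer to a linear function of $n$, and the discrepancy between the true lattice optimum and the continuous optimum is periodic in $n$ with that period. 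The rounding to "the least odd integer nearest to $\frac{2(t-1)j}{-2+s+t}$" in the definition of $v_j$ is exactly this lattice-rounding phenomenon, and plugging the rounded value back into the quadratic $\delta$ produces the stated $c_n$; the quadratic and linear coefficients $2(\frac{1-st}{-2+s+t}-r)$ and $2(2+r)$ come out of the Hessian and gradient of $\delta$ at the continuous optimum and are independent of the rounding.

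Step (ii), ruling out cancellation, is where I expect the real work to be, and it is exactly the reason the authors restrict to this subfamily rather than all pretzel or Montesinos knots (as the introduction says explicitly). The tool is the last sentence of Lemma \ref{lem.CJP}: every nonzero term has leading coefficient $C(-1)^{\frac{ar+bs+ct}{2}}$ with $C$ independent of $a,b,c$. So if the continuous optimum is attained at a \emph{unique} lattice point, we are immediately done — the single top-degree term cannot cancel against anything. If several lattice points tie for the maximal degree, I would need to check that the signs $(-1)^{\frac{ar+bs+ct}{2}}$ at the tied points do not conspire to cancel; with $r,s,t$ all odd the parity of $\frac{ar+bs+ct}{2}$ is controlled by $\frac{a+b+c}{2}$ mod $2$, and one shows the tied maximizers all share this parity (or that there is at most one of them), so the leading terms add rather than cancel. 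I anticipate that for the chosen family a careful but elementary argument shows the maximizer is essentially unique (up to a symmetric ambiguity that preserves the sign), so no cancellation occurs; verifying this uniqueness, and carefully tracking the boundary cases of the polytope where the piecewise formula for $\delta$ switches pieces, is the main obstacle. Finally, matching the resulting piecewise-quadratic-in-$n$ formula to the clean closed forms in the statement — in particular checking $d_+ J_{P}(n)$ uses the color convention $J_K(n+1)=(-1)^n\langle K,n\rangle$, so one shifts $n\mapsto n-1$ at the end — is routine bookkeeping.
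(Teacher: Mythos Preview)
Your overall strategy---assemble the termwise degree $\Phi(a,b,c,n)$ from Lemmas~\ref{lem.CJP} and~\ref{lem:destimate}, maximize it over the lattice polytope, and rule out cancellation via the leading sign $(-1)^{(ar+bs+ct)/2}$---is exactly the paper's. Two points in your execution need correcting. The minor one: the inequality goes the other way, $d_+$ of a sum is at most the maximum of the termwise degrees, with equality unless top terms cancel.

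The substantive one is your picture of case~(2). The optimum is \emph{not} at an interior critical point of the three-dimensional polytope, and $\Phi$ is not concave in $a$ in any useful sense: under $r<-1<1<s,t$ one checks directly that $\Phi(a+2,b,c,n)>\Phi(a,b,c,n)$ throughout, so the maximum is forced onto the face $a=b+c$. Restricting there gives an explicit quadratic $R(b,c)=\Phi(b+c,b,c,n)$, and under the hypothesis $s<-r$ or $t<-r$ one finds $R(b+2,c)>R(b,c)$ and $R(b,c+2)>R(b,c)$, pushing the optimum further onto the edge $b+c=2n$, $a=2n$. Only on that one-dimensional edge does a genuine negative-definite quadratic optimization occur; its real maximizer $m=\tfrac{2n(t-1)-s+t}{-2+s+t}$ is the source of the period $\tfrac{-2+s+t}{2}$ and of the rounding $v_j$, not a three-variable linear system. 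Your cancellation argument then lands cleanly: on this edge $a+b+c=4n$ is constant, so by your own parity observation $(-1)^{(ar+bs+ct)/2}=(-1)^{(a+b+c)/2}$ is the same at every point of the edge, and the at most two tied lattice maximizers add rather than cancel. In case~(1) the same monotonicity-to-the-boundary mechanism, but now with $R$ strictly decreasing in $b$ and $c$ (this is where $s,t>-2r$ is used), lands you at the unique corner $b=c=0$, hence $a=0$.
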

\begin{proof}
The domain of summation $D_n$ is the intersection of the cone $|a-b|\leq c \leq a+b$ with
the cube $[0,2n]^3$ with the lattice $(2\mathbb{Z})^3$, so the maximal degree of the summands gives rise to the following inequality: 
\[
d_+J_{P(\frac{1}{r},\frac{1}{s},\frac{1}{t})}(n+1) \leq \max_{a,b,c\in D_n}\Phi(a,b,c,n),\]
where
\begin{align*}
\Phi(a,b,c,n)&=\ d_+O^a+d_+O^b+d_+O^c+d_+f(a)r+d_+f(b)s+d_+f(c)t+2d_+\Delta(a,b,c,n,n,n)\\
&+\ d_+\langle\Theta\ a,b,c\rangle-d_+\langle\Theta\ a,n,n\rangle-d_+\langle\Theta\ b,n,n\rangle-d_+\langle\Theta\ c,n,n\rangle.
\end{align*}

In general, this is just an inequality, but when $\Phi$ takes a \emph{unique} maximum, no cancellation can occur so
we have an actual equality.

To analyse the situation further we focus on the case of interest, which is $r \leq -1 < 2 \leq s, t$ all odd.
In that case we have the following three inequalities on $D_n$: $\Phi(a+2,b,c,n)>\Phi(a,b,c,n)$ and $\Phi(a,b+2,c,n)<\Phi(a,b,c,n)$ and $\Phi(a,b,c+2,n)<\Phi(a,b,c,n)$.
This shows that the maxima on $D_n$ must occur when $a=b+c$ and so we may restrict our attention to the triangle $T_n$ given by $0\leq b,c, b+c\leq 2n$.
On $T_n$ we compute
\begin{align*}
R(b,c) &= \Phi(b+c,b,c,n) \\
&=  - \frac{(r+s)b^2}{2} -(1+r) b c - \frac{(r+t)c^2}{2}+(2-r-s) b + (2-r-t) c- 2 n.
\end{align*}

With stronger assumptions,  we easily find many cases where $R(b, c)$ has a unique maximum on $T_n$:

First, if $s,t>-2 r$ then $R(b+2,c)<R(b,c)$ and $R(b,c+2)<R(b,c)$,  so any maximum must be at the origin $b=c=0$.
Secondly, if $s < -r$ or $t < -r$ then $R(b+2,c)>R(b,c)$ and $R(b,c+2)>R(b,c)$, so any maximum must be on the line $b+c=2n$.

In the first case we have $R(0,0) = -2n$,  so 
\[d_+J_{P(\frac{1}{r},\frac{1}{s},\frac{1}{t})}(n) = -2(n-1).\] 

In the second case we see that $R(b,2n-b)$ is a negative definite quadratic whose (real) maximum is at $m=\frac{2 n (t-1) -s+t}{-2 + s + t}$ and $0\leq m\leq 2n$ since $s>1$.
If $m$ is an odd integer,  then there are precisely two maxima and they may cancel out if the coefficients of the leading terms are opposite.
From Lemma \ref{lem.CJP} we know that the leading coefficients are $C(-1)^{\frac{ar+bs+ct}{2}}$ for some constant $C$ independent of $a,b,c$. On the diagonal $a=b+c$ and $c=2n-b$
we see that no cancellation will occur since $s+t$ is even.

Define $m'$ to be $m$ rounded down to the nearest even integer, then the exact maximal degree will be given by
\[d_+J_{P(\frac{1}{r},\frac{1}{s},\frac{1}{t})}(n+1) = R(m',2n-m')\]
To get an exact expression we set $N = n+1$ and $N=q \frac{-2+s+t}{2}+j$ for some $0\leq j < \frac{-2+s+t}{2}$.
Now $m = \frac{2(t-1)N}{-2+s+t}-1 = (t-1)q-1+\frac{2(t-1)j}{-2+s+t}$ so $m' = 2(t-1)\frac{N-j}{-2+s+t}-1+v_j$ where $v_j$ is the (least) odd integer nearest to $\frac{2(t-1)j}{-2+s+t}$. 
Finally expanding $R(m',2(N-1)-m')$ as a quadratic in $N$ we find the desired expression for $d_+J_{P(\frac{1}{r},\frac{1}{s},\frac{1}{t})}(N)$

\end{proof}

The technique presented here can certainly be strengthened and perhaps be extended to more general pretzel knots, Montesinos knots and beyond. However,
serious issues of possible and actual cancellations will continue to cloud the picture. More conceptual methods need to be developed.

\section{Boundary slopes of 3-string Pretzel knots.} \label{sec:HO}
In this section we describe the Hatcher-Oertel Algorithm \cite{HatcherOertel} as restricted to pretzel knots $P(\frac{1}{r}, \frac{1}{s}, \frac{1}{t})$. In an effort to make this paper self-contained, we describe explicitly how one may associate a candidate surface to an edgepath system, and how to compute boundary slopes and the Euler characteristic of an essential surface corresponding to an edgepath system. Readers who are familiar with the algorithm may skip to Section \ref{sec:verifying} directly. Our exposition follows that of \cite{KS10} and \cite{Ich15}. Dunfield has implemented the algorithm completely in a program \cite{DunfieldM}, which determines the list of boundary slopes given any Montesinos knot. For other examples of applications and expositions of the algorithm, see \cite{KS07}, \cite{ES05}.

\subsection{Incompressible surfaces and edgepaths.} \label{subsec:surface}
Viewing $S^3$ as the join of two circles $C_1$ and $C_2$, subdivide $C_2$ as an $n+1$-sided polygon. The join of $C_1$, called the \emph{axis}, with the $i$th edge of $C_2$ is then a ball $B_i$. For a Montesinos knot $K(\frac{p_1}{q_1}, \frac{p_2}{q_2}, \ldots, \frac{p_n}{q_n})$, we choose $B_i$ so that each of them contains a tangle of slope $p_i/q_i$, with $B_0$ containing the trivial tangle. These $n+1$ balls $B_i$ cover $S^3$, meeting each other only in their boundary spheres. We may view each tangle $p_i/q_i$ via a 2-bridge knot presentation in $S_i^2 \times [0, 1]$ in $B_i$, with the two bridges puncturing the 2-sphere at each $\ell \in [0, 1]$, and arcs of slope $p_i/q_i$ lying in $S_i^2 \times 0$. See \cite[Pg. 1, Figure 1b)]{HT85}.

We identify $S_i^2 \times \ell \setminus K$ with the orbit space $\mathbb{R}^2/\Gamma$, where $\Gamma$ is the group generated by 180$^\circ$ rotation of $\mathbb{R}^2$ about the integer lattice points. We use this identification to assign \emph{slopes} to arcs and circles on the four-punctured sphere $S_i^2 \times \ell \setminus K$ as in \cite{HT85}. \emph{Note.} This slope is not the same as the boundary slope of an essential surface! 

Hatcher and Thurston showed \cite[Theorem 1]{HT85} that every essential surface may be isotoped so that the critical points of the height function of $S$ in $B_i$ lie in $S_i^2\times \ell$ for distinct $\ell$'s, and the intersection consists of arcs and circles. Going from $\ell=0$ to $\ell=1$, the slopes of arcs and circles of $S\cap S_i^2 \times \ell$ at these critical levels determine an \emph{edgepath} for $B_i$ in a 1-dimensional cellular complex $\mathcal{D} \subset \mathbb{R}^2$. 

We may represent arcs and circles of certain slopes on a 4-punctured sphere via the $(a, b, c)-$coordinates as shown in Figure \ref{fig:curve}, where $c$ is parallel to the axis. The complex $\mathcal{D}$ is obtained by splicing a 2-simplex in the projective lamination space of the 4-punctured sphere in terms of projective weights $a$, $b$, and $c$, so that each point has horizontal coordinate $b/(a+b)$ and vertical coordinate $c/(a+b)$ in $\mathbb{R}^2$. 

\begin{figure}[ht]
 \def\svgwidth{.3\textwidth}
\includegraphics[width=.3\textwidth]{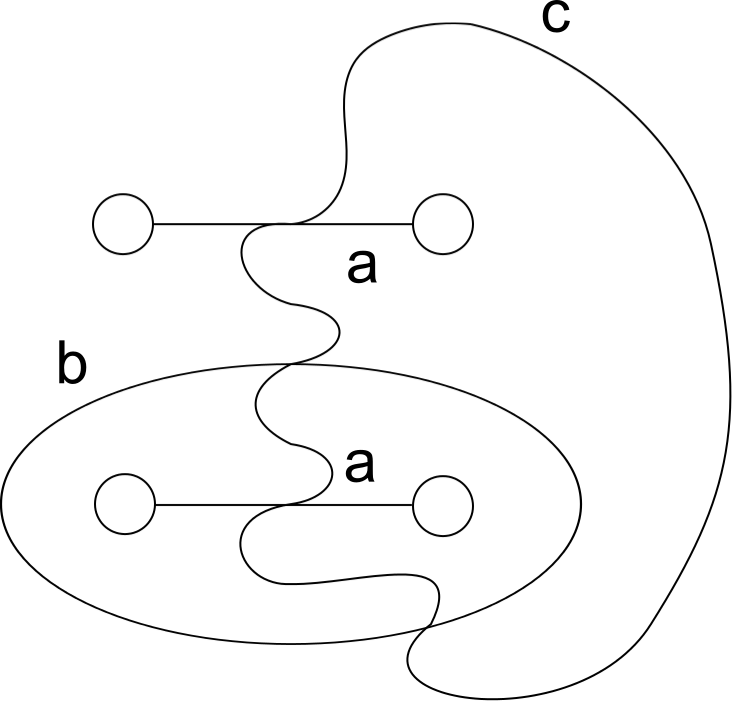}
\hspace{1cm}
\includegraphics[width=.3\textwidth]{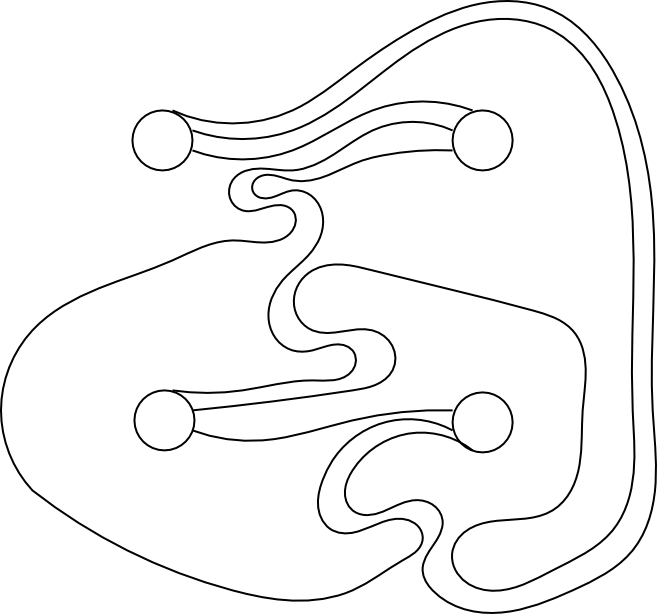}
\caption{\label{fig:curve} The generators $a, b$ and $c$ and the corresponding set of disjoint curves on the 4-punctured sphere with $a, b, c$-coordinates $(3, 1, 2)$. The curve system has slope $1/2$ on the 4-punctured sphere. } 
\end{figure} 

Vertices and paths on $\mathcal{D}$ are defined as follows.
\begin{itemize}
\item There are three types of vertices: $\langle p/q \rangle$, $\langle p/q \rangle^{\circ}$, and $\langle 1/0 \rangle$, where $p/q \not= 1/0$ is an arbitrary irreducible fraction. A vertex labeled $\langle p/q \rangle$ has horizontal coordinate $(q-1)/q$ and vertical coordinate $p/q$. A vertex labeled $\langle p/q \rangle^{\circ}$ has horizontal coordinate $1$ and vertical coordinate $p/q$. The vertex labeled $\langle 1/0 \rangle$ has coordinate $(-1,0)$. 

\item There is a path in the plane between distinct vertices $\langle p/q \rangle$ and $\langle r/s \rangle$ if $|ps-qr| = 1$. The path is denoted by $\langle p/q \rangle \text{\pdash}\langle r/s \rangle$. \emph{Horizontal edges} $\langle p/q \rangle^{\circ} \text{\pdash} \langle p/q \rangle$ and \emph{vertical edges} $\langle z\rangle^{\circ} \text{\pdash} \langle z\pm 1 \rangle^{\circ}$ are also allowed. 
\end{itemize}

See Figure \ref{fig:edgepathcomplex}. 
\begin{figure}[ht]
\centering
\includegraphics[scale=.2]{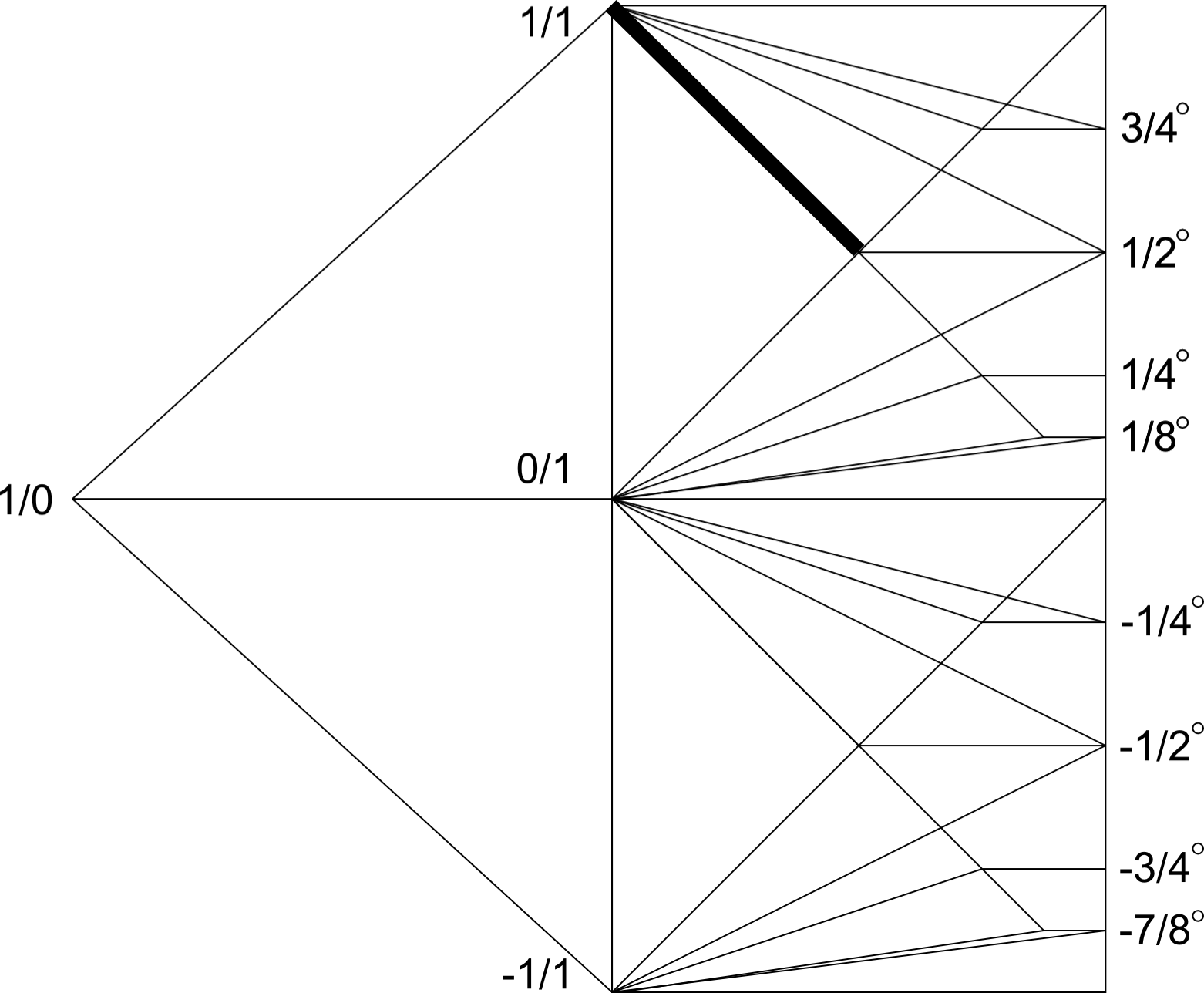}
\caption{A portion of the complex $\mathcal{D}$, with an edgepath from $1/1$ to $1/2$ indicated in bold.}
\label{fig:edgepathcomplex}
\end{figure}

\begin{defn} \label{defn:edgepath}
 A \emph{candidate edgepath} $\gamma$ for the fraction $p/q$ is a piecewise linear path in $\mathcal{D}$ satisfying the following properties: 
\begin{itemize}
\item[(E1)] The starting point of $\gamma$ lies on the edge $\langle p/q \rangle \text{\pdash} \langle p/q \rangle^{\circ}$. If the starting point is not the vertex $\langle p/q\rangle$ or $\langle p/q \rangle^{\circ}$, then $\gamma$ is constant. 
\item[(E2)] The edgepath $\gamma$ never stops and retraces itself, nor does it ever go along two sides of the same triangle in $\mathcal{D}$ in succession. 
\item[(E3)] The edgepath $\gamma$ proceeds monotonically from right to left, while motions along vertical edges are permitted. 
\end{itemize} 
An \emph{edgepath system} $\{\gamma_1, \ldots, \gamma_n\}$ is a set of edgepaths, one for each fraction $p_i/ q_i$ satisfying:
\begin{itemize}
\item[(E4)] \label{e:4} The endpoints of all the $\gamma_i$'s are points of $\mathcal{D}$ with identical $a, b$ coordinates and whose $c$-coordinate sum up to 0. In other words, the endpoints have the same horizontal coordinates and their vertical coordinates add up to zero. 
\end{itemize} 
\end{defn}
Additionally, if an edgepath ends at the point with slope $1/0$, then all other edgepaths in the system also have to end at the same point. 

\begin{thm}\cite[Proposition 1.1]{HatcherOertel} \label{thm:HO} Every essential surface in $S^3-K(\frac{p_1}{q_1},\ldots, \frac{p_n}{q_n})$ having nonempty boundary of finite slope is isotopic to one of the candidate surfaces. 
\end{thm}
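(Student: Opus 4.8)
The plan is to recover the Hatcher--Oertel argument by working ball-by-ball on the decomposition $S^3 = B_0\cup B_1\cup\cdots\cup B_n$ described above and then re-assembling, using the Hatcher--Thurston normal form for surfaces in the two-bridge tangle balls. Let $S\subset S^3-K$ be essential with nonempty boundary of finite slope. First I would put $S$ in standard position with respect to the height functions on the $B_i$: by \cite[Theorem 1]{HT85} we may isotope $S$ so that, for each $i$, the critical points of the height function on $S\cap B_i$ lie in distinct spheres $S_i^2\times\ell$ and $S\cap(S_i^2\times\ell)$ is a disjoint union of arcs and circles for every $\ell$. Using incompressibility and $\partial$-incompressibility of $S$, an innermost-disk/outermost-arc argument removes circles bounding disks in $S_i^2\times\ell\setminus K$ and $\partial$-parallel arcs, so at each regular level the intersection is a curve system on the four-punctured sphere of a well-defined slope $p/q\in\mathbb{Q}\cup\{1/0\}$.

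The second step is to read off an edgepath $\gamma_i$ for each $i$. As $\ell$ increases from $0$ to $1$ the slope of $S\cap(S_i^2\times\ell)$ is locally constant away from critical levels; minima and maxima of circles and births/deaths of inessential components do not change it, while a saddle changes $p/q$ to $r/s$, and the combinatorics of essential curve systems on the four-punctured sphere (the Farey neighbor condition) forces $|ps-qr|=1$, i.e.\ a transition along an edge $\langle p/q\rangle\text{\pdash}\langle r/s\rangle$ of $\mathcal{D}$; the degenerate transitions near a puncture or at the meridian account for the horizontal and vertical edges and the vertices $\langle p/q\rangle^{\circ}$ and $\langle 1/0\rangle$ (the hypothesis of finite slope is what keeps us away from the purely meridional situation). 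Property (E1) holds because near $\ell=0$ the intersection pattern is dictated by the tangle of slope $p_i/q_i$, and the constant-edgepath case occurs exactly when $S$ does not interact with that tangle. Property (E4) is the matching condition across the common boundary spheres $\partial B_i$: the curve systems there must agree, so the $a,b$ coordinates coincide, and closing the pieces up forces the signed count of strands running along the axis, i.e.\ the $c$-coordinates, to sum to zero; the condition that all $\gamma_i$ terminate at $\langle 1/0\rangle$ simultaneously when one of them does is forced by the same gluing.

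The third and hardest step is conditions (E2) and (E3), together with the conclusion that $S$ is determined up to isotopy by its edgepath system. If some $\gamma_i$ reversed the direction of its horizontal coordinate, or ran along two sides of a triangle of $\mathcal{D}$ in succession, then the portion of $S$ between the two relevant critical levels would contain a compressing or $\partial$-compressing disk, or could be pushed across a product region to cancel a pair of critical levels, contradicting essentiality or the minimality of the normal form; making this precise, especially the cases involving the vertices $\langle p/q\rangle^{\circ}$ and $\langle 1/0\rangle$ and the exclusion of periodic behavior of the path, is the bulk of the technical work. Finally, once the edgepath system is known to satisfy (E1)--(E4), I would reconstruct $S$ level by level: between consecutive critical levels it is a product (curve system)$\times$interval, at each critical level it is a standard saddle or cap, and these pieces glue over the $\partial B_i$ precisely as prescribed by the combinatorial data, so $S$ is isotopic to the candidate surface built from its edgepath system. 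The main obstacle throughout is controlling the isotopies tightly enough that the correspondence between the topological surface and the combinatorial edgepath is faithful in both directions; this is exactly where the full strength of the Hatcher--Thurston machinery is needed.
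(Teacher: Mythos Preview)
The paper does not prove this theorem at all: it is stated as a citation of \cite[Proposition 1.1]{HatcherOertel} and used as a black box to justify the Hatcher--Oertel algorithm. There is therefore no ``paper's own proof'' to compare your attempt against.

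What you have written is a reasonable high-level sketch of the original Hatcher--Oertel argument (via the Hatcher--Thurston normal form on each tangle ball, extraction of an edgepath from the sequence of slopes, and reduction of (E2)--(E3) to essentiality). As a sketch it is faithful to the strategy of \cite{HatcherOertel}, but you should be aware that the present paper neither carries out nor needs any part of this argument; it simply invokes the result. If your goal is to supply a proof the paper omits, you would need to fill in the ``third and hardest step'' with the actual arguments from \cite{HatcherOertel}, since as written that part is only a list of what must be checked rather than a proof.
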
 

Based on Theorem \ref{thm:HO}, the Hatcher-Oertel algorithm enumerates all essential surfaces through the following steps. 
\begin{itemize}
\item For each fraction $\frac{p_i}{q_i}$, enumerate the possible edgepaths which correspond to continued fraction expansions of $\frac{p_i}{q_i}$ \cite{HT85}. 
\item Determine an edgepath system $\{\gamma_i \}$ by solving for sets of edgepath satisfying conditions (E1)-(E4). This gives the set of \emph{candidate surfaces}. 
\item Apply an incompressibility criterion in terms of edgepaths to determine if a given candidate surface is essential. 
\end{itemize} 

We describe these steps in detail in the next few sections.

\subsection{Applying the algorithm} 

We denote an edgepath by fractions $\langle \frac{p}{q} \rangle$,$\langle \frac{p}{q} \rangle^{\circ}$  and linear combinations of fractions connected by long dashes \pdash. The first fraction as we read from right to left will be written first. 

A point on an edge $\langle p/q \rangle \text{\pdash} \langle r/s \rangle$ is denoted by 
\[\frac{k}{m} \left \langle \frac{p}{q} \right \rangle + \frac{m-k}{m} \left \langle \frac{r}{s} \right \rangle, \]
with $a, b, c$-coordinates given by taking the linear combination of the $a, b, c$-coordinates of $\langle \frac{p}{q} \rangle$ and $\langle \frac{r}{s} \rangle$. 
\[ k(1, q-1, p) + (m-k)(1, s-1, r). \] This can then be converted to horizontal and vertical coordinates on $\mathcal{D}$.

We describe how to associate a candidate surface to a given edgepath system. Since we can isotope an essential surface so that if one edge of its edgepath is constant, then the entire edgepath is a single constant edge, we will only deal with the following two cases. 
\begin{itemize}
\item When $\gamma_i$ is constant.  \ \\
In this case, $\gamma_i$ is a single edge \[ \left \langle \frac{p}{q} \right\rangle^{\circ}\text{\pdash}\ \frac{k}{m}\left\langle \frac{p}{q} \right\rangle^{\circ} +  \frac{m-k}{m}\left\langle \frac{p}{q}\right \rangle.\]
Let $0 < \ell \leq 1$. We associate to $\gamma_i$ the surface in $B_i$ which has $2k$ arcs of slope $\frac{p}{q}$ coming into each pair of punctures of $S_i^2 \times \ell \setminus K$, and $m-k$ circles encircling a pair of punctures with slope $\frac{p}{q}$. Finally, we cap off the $m-k$ circles at $S_i^2 \times 0$.  

\item When $\gamma_i$ is not constant. 

Then $\gamma_i$ consists of edges of the form 
\[ \left \langle \frac{p}{q} \right\rangle \text{\pdash}\ \frac{k}{m}\left\langle \frac{p}{q} \right\rangle +  \frac{m-k}{m}\left\langle \frac{r}{s}\right \rangle.\] 
It begins with the vertex $\langle \frac{p_i}{q_i} \rangle$, and ends at $k/m\left\langle \frac{p}{q} \right\rangle + (m-k)/m\left\langle \frac{r}{s} \right \rangle$ for some fractions $\frac{p}{q}$, $\frac{r}{s}$. We associate to $\gamma_i$ the surface such that $S\cap S^2_i \times 0$ consists of $2m$ arcs going into a pair of punctures with slope $\frac{p_i}{q_i}$. For each successive edge in $\gamma_i$ of the form described above, we assign the surface whose intersection with $S_i^2\times \ell$ changes from $2m$ arcs of slope $\frac{p}{q}$ going into two pairs of punctures, to $2k$ arcs of slope $\frac{p}{q}$ going into the original pair of punctures and  $2(m-k)$ arcs of slope $\frac{r}{s}$ going into the other pairs of punctures through successive saddles. There is a choice, up to isotopy, of two possible slope-changing saddles, however, the choice does not affect the resulting homology class in $H_1(\partial N(K))$ of the boundary of the surface or its Euler characteristic. 
\end{itemize} 
See Figure \ref{fig:saddle} for these two cases. 
\begin{figure}[ht]
\includegraphics[scale=.19]{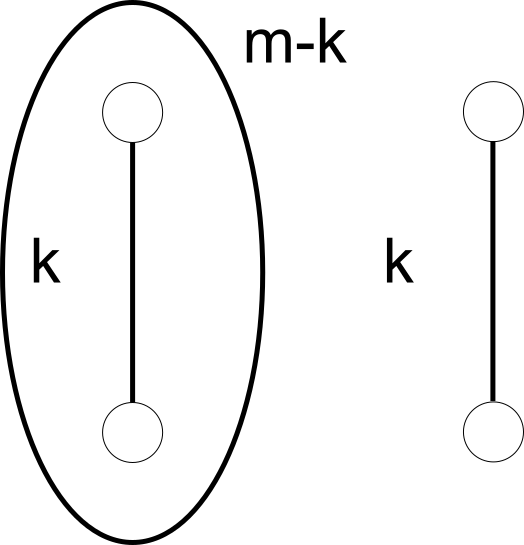} \ \\ 
\vspace{1cm}
\includegraphics[scale=.19]{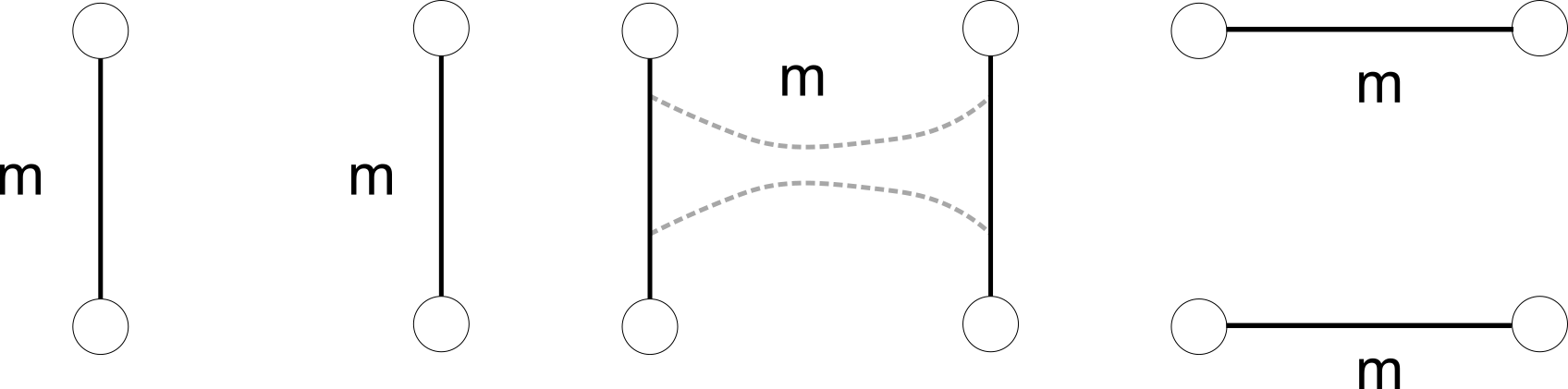}
\caption{\label{fig:saddle} Intersections $S\cap S_i^2\times \ell$: on top, a constant edgepath; below, a non-constant edgepath, see also Figure \ref{fig:twist}.  }
\end{figure}

To finish constructing the surface, we identify $2a$ half arcs and $b$ half circles on each of the two hemispheres and on the resulting single hemisphere. See Figure \ref{fig:hi}. 
\begin{figure}[ht]
\includegraphics[scale=1]{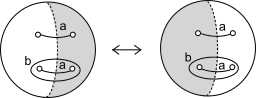}
\caption{ The two hemispheres that are identified.\label{fig:hi}}
\end{figure}

To check that a given candidate surface is essential, Hatcher and Oertel used a technical idea of the \emph{$r$-values} of the edgepath system (not the same $r$ as in $P(\frac{1}{r}, \frac{1}{s}, \frac{1}{t}$)). The idea is to examine the intersection of a compressing or $\partial$-compressing disk with the boundary sphere of each ball $B_i$, which will determine an $r$-value for each edgepath $\gamma_i$. If the $r$-values of a candidate surface disagree with the values that would result from the existence of a compressing disk, then it is incompressible. We state the criterion for incompressibility in terms of quantities that are easily computed given an edgepath system. 

\begin{defn} The \emph{r-value} for an edge $\langle \frac{p}{q} \rangle \text{\pdash} \langle \frac{r}{s} \rangle$ where $\frac{p}{q} \not= \frac{r}{s}$ is $s-q$. If $\frac{p}{q}= \frac{r}{s}$ or the path is vertical, then the $r$-value is 0. 
\end{defn} 
The $r$-value for an edgepath $\gamma$ is just the $r$-value of the final edge of $\gamma$. 

\begin{thm} \label{thm:esscriterion} \cite[Corollary 2.4]{HatcherOertel} A candidate surface is incompressible unless the cycle of $r$-values of $\{\gamma_i\}$  is one of the following types: 
\begin{itemize}
\item $(0, r_2,\ldots, r_n)$ 
\item $(1, \ldots, 1, r_n)$. 
\end{itemize} 
\end{thm}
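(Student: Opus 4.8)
The statement to prove is Theorem~\ref{thm:esscriterion} (the incompressibility criterion of Hatcher--Oertel, \cite[Corollary 2.4]{HatcherOertel}), whose proof in this self-contained exposition would follow the original argument adapted to the pretzel setting. Let me sketch how I would organize it.

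\medskip

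\textbf{Proof proposal.} The plan is to argue by contrapositive: starting from a hypothetical compressing or $\partial$-compressing disk $D$ for a candidate surface $S$ associated to an edgepath system $\{\gamma_i\}$, I would extract numerical constraints on the $r$-values and show they force the cycle $(r_1,\dots,r_n)$ into one of the two listed forms. First I would recall the setup from Section~\ref{subsec:surface}: $S$ meets each ball $B_i$ in the family of arcs and circles prescribed by $\gamma_i$, and a compressing disk $D$ can be isotoped (by a standard innermost-disk / general-position argument on the spheres $S_i^2\times\ell$) so that $D\cap(S_i^2\times\ell)$ is a union of arcs and circles meeting $S$ in a controlled way. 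The intersection $D\cap S_i^2$ with each boundary sphere is then itself a 1-manifold whose components have well-defined slopes, and the key point is that $D$, being a disk, imposes that these slopes trace out an edgepath-like object in $\mathcal{D}$ that must ``fit inside'' the $\gamma_i$. Following Hatcher--Oertel, I would encode the way $\partial D$ runs along $S$ near the final edge of each $\gamma_i$ by the integer $r_i = s-q$ (the difference of denominators of the endpoints of that final edge), and observe that the total twisting/winding of $\partial D$ around $K$, computed additively over the $B_i$, must vanish since $D$ is embedded and closed up.

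\medskip

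The core of the argument is the local analysis at each ball $B_i$. For a non-constant final edge $\langle p/q\rangle\,\pdash\,\langle r/s\rangle$, I would show that a compressing disk contributes to the global count a quantity bounded in terms of $r_i$, with the two degenerate possibilities being $r_i = 0$ (the disk is ``vertical'' or lies over a constant sub-edge, giving an annular/trivial contribution) and $r_i = 1$ (the minimal nontrivial saddle contribution). The constant-edgepath case is handled separately and directly: a $\gamma_i$ consisting of a single constant edge $\langle p/q\rangle^{\circ}\,\pdash\,\tfrac{k}{m}\langle p/q\rangle^{\circ}+\tfrac{m-k}{m}\langle p/q\rangle$ has $r$-value $0$ by definition, and one checks that the capped-off circles can always be compressed unless a matching condition holds elsewhere — this is where the $(0,r_2,\dots,r_n)$ pattern comes from. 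Summing the local contributions around the cycle and using the ``$c$-coordinates sum to zero'' constraint (E4) gives a single Diophantine equation; solving it under the embeddedness constraint leaves exactly the two cycle types $(0,r_2,\dots,r_n)$ and $(1,\dots,1,r_n)$. Conversely, and crucially for the ``unless'' direction to be sharp, one exhibits actual compressing disks realizing these two patterns, so the criterion is not vacuous.

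\medskip

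The main obstacle, I expect, is the local disk analysis at a single $B_i$ — i.e., making precise the claim that the intersection pattern of an embedded compressing disk with the nested spheres $S_i^2\times\ell$ is constrained exactly to contribute $r_i\in\{0,1\}$ in the degenerate cases and otherwise genuinely obstructs the disk. This requires a careful normal-form / innermost-arc argument for $D$ relative to the arcs of $S$ inside $B_i$, keeping track of how slopes change across saddles and why a disk (as opposed to an annulus or higher-genus piece) cannot interpolate between two distinct slopes without the ``room'' measured by $r_i\geq 2$. The bookkeeping of signs and of the $\partial$-compressing case (where $\partial D$ is allowed to run partway along $\partial N(K)$) adds further case distinctions. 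Since we are writing an exposition adapted to pretzel knots, I would streamline this by quoting the general position and normal-form reductions from \cite{HatcherOertel} and \cite{HT85} and only spell out the slope arithmetic in the special case where all three tangles are of the form $1/r,1/s,1/t$, where the edgepaths are short and the $r$-values take only a few values; the full generality is already in \cite{HatcherOertel} and need not be reproduced.
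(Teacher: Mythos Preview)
The paper does not prove Theorem~\ref{thm:esscriterion} at all: it is stated with a citation to \cite[Corollary~2.4]{HatcherOertel} and then used as a black box in Section~\ref{sec:verifying} to certify that the explicitly exhibited edgepath systems give essential surfaces. There is therefore no ``paper's own proof'' to compare your proposal against; your write-up is an attempt to supply what the paper deliberately omits.

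As a sketch of the actual Hatcher--Oertel argument your outline is in the right spirit (put a compressing or $\partial$-compressing disk in general position with respect to the separating spheres, analyze the resulting arc pattern locally in each $B_i$, and read off constraints on the final edges), but a few points are off. The $r$-value is a combinatorial invariant of the \emph{last edge} of $\gamma_i$ and the argument in \cite{HatcherOertel} proceeds via their Propositions~2.1--2.3, which classify how a compressing disk can meet a single tangle ball; the global conclusion is not obtained by ``summing contributions to a Diophantine equation coming from (E4)'' as you suggest, but rather from the way the local pieces of the disk must glue across the spheres. Also, the statement is only one implication (incompressible \emph{unless} the cycle has one of the listed forms); your ``converse'' paragraph, exhibiting compressing disks for those patterns, is not part of what needs to be shown and is in fact false in general --- those $r$-value patterns are necessary but not sufficient for compressibility. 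If you want to include a proof in an exposition, the cleanest route is to summarize \cite[\S2]{HatcherOertel} directly rather than to reinvent the bookkeeping.
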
 

\subsection{Computing boundary slope from an edgepath system.} \label{subsec:slope}

Given an edgepath system $\{\gamma_i\}$ corresponding to an essential candidate surface, we describe how to compute its boundary slope. Note that there may be infinitely many surfaces carried by an edgepath system, however, they all have a common boundary slope. We use a representative with the minimum number of sheets to make our computations. Within a ball $B_i$, all surfaces look alike near $S^2_0$, thus we need only to consider the contribution to the boundary slope from the rest of the surface. The number of times the boundary of the surface winds around the longitude is given by $m$, the number of sheets of the surface. We measure the twisting around the meridian by measuring the rotation of the inward normal vector of the surface. Each time the surface passes through a non-constant saddle which does not end at arcs of slope 1/0, the vector goes through two full rotations. We choose the counterclockwise direction (and therefore the direction for a slope-increasing saddle) to be negative, and we choose the clockwise direction to be positive. See Figure \ref{fig:twist}. We do not deal with the case where the saddle ends at arcs of slope 1/0 in this paper, but it is easy to see that these saddles do not contribute to boundary slope.

\begin{figure}[ht]
\includegraphics[scale=.5]{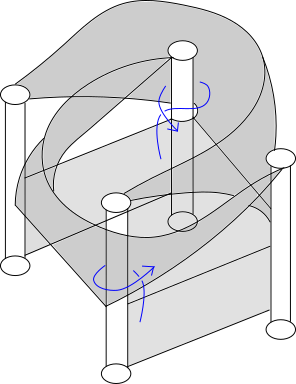}
\caption{A saddle going from arcs of slope 0/1 to arcs of slope 1/2 is shown in the picture. Note that on each of a pair of opposite punctures, the inward-pointing normal vector of the surface twists through arcs of slope 1/0 once. \label{fig:twist}}
\end{figure} 

The total number of twists $\tau(S)$ for a candidate surface $S$ from $\ell=0$ to $\ell=1$ is defined as
\[ \tau(S) := 2(s_- - s_+)/m = 2(e_- - e_+), \] where $s_-$ is the number of slope-decreasing saddles and $s_+$ is the number of slope-increasing saddles of $S$. This measures the contribution to the boundary slope of $S$ away from $\ell =0$.
In terms of egdepaths, $\tau(S)$ can be written in terms of the number $e_-$ of edges of $\gamma_i$ that decreases slope and $e_+$, the number of edges that increases slope. For an interpretation of this twist number in terms of lifts of these arcs in $\mathbb{R}^2 \setminus \mathbb{Z} \times \mathbb{Z}$, see \cite[Pg. 460]{HatcherOertel}.  If $\gamma_i$ ends with the segment 
\[ \left \langle \frac{p}{q} \right \rangle \text{\pdash}\ \frac{k}{m} \left \langle \frac{p}{q} \right \rangle + \frac{m-k}{m} \left  \langle \frac{r}{s} \right \rangle, \] 
then the final edge is counted as a fraction $1-k/m$. 
We add back the twists in the surface at level $\ell=0$ by subtracting the twist number of a Seifert surface $S_0$ obtained from the algorithm. The reason for this is that a Seifert surface always has zero boundary slope. Finally, the boundary slope of a candidate surface $S$ is 
\[ b_s = \tau(S) - \tau(S_0). \] 

In the interest of brevity, we do not discuss how to find this Seifert surface and merely exhibit examples. For a general algorithm to determine a Seifert surface which is a candidate surface, see the discussion in \cite[Pg. 460]{HatcherOertel}. 

\subsection{Computing the Euler characteristic from an edgepath system.} \label{subsec:Euler}
From the construction of Section \ref{subsec:surface}, we compute the Euler characteristic of a candidate surface associated to an edgepath system $\{\gamma_i\}$, where none of the $\gamma_i$ are constant or ends in $1/0$ as follows. Recall that $m$ is the number of sheets of the surface $S$. We begin with $2m$ disks of slope $\frac{p_i}{q_i}$ in each $B_i$.  

\begin{itemize}
\item Each non-fractional edge $\langle \frac{p}{q} \rangle \text{\pdash} \langle \frac{r}{s} \rangle$ is constructed by gluing $m$ number of saddles that changes $m$ arcs of slope $\frac{p}{q}$ to slope $\frac{r}{s}$, therefore decreasing the Euler characteristic by $m$. 
\item A fractional edge of the form $\langle \frac{p}{q} \rangle$\pdash $\frac{k}{m}\langle \frac{p}{q} \rangle + \frac{m-k}{m}\langle \frac{r}{s} \rangle$ changes $2(m-k)$ out of $2m$ arcs of slope $\frac{p}{q}$ to $2(m-k)$ arcs of slope $\frac{r}{s}$ via $m-k$ saddles, thereby decreasing the Euler characteristic by $m-k$. 
\end{itemize} 

This takes care of the individual contribution of an edgepath $\gamma_i$. Now the identification of the surfaces on each of the 4-punctured sphere will also affect the Euler characteristic of the resulting surface. In terms of the common $(a, b, c)$-coordinates shared by each edgepath,  there are two cases: 
\begin{itemize}
\item The identification of hemispheres between neighboring balls $B_i$  and $B_{i+1}$ identifies $2a$ arcs and $b$ half circles.  Thus it subtracts $2a+b$ from the Euler characteristic.  The final step of identifying hemispheres from $B_0$ and $B_n$ on a single sphere adds $b$ to the Euler characteristic. 
\end{itemize}

\section{proof of Theorem \ref{thm:slope2}} \label{sec:verifying}

We shall restrict the Hatcher-Oertel algorithm to $P(\frac{1}{r}, \frac{1}{s}, \frac{1}{t})$, when $r<0 $ and $s, t>0$ are odd. For 3-string pretzel knots, it is not necessary to include edges ending at the point with slope $\frac{1}{0}$ by the remark following Proposition 1.1 in \cite{HatcherOertel}. An edgepath system with endpoints at $\frac{1}{0}$ implies the existence of axis-parallel annuli in the surface, which either produce compressible components or can be eliminated by isotopy. 

For each fraction of the form $\frac{1}{p}$, there are two choices of edgepaths that satisfy conditions (E1) through (E3). They correspond to  two continued fraction expansions of $\frac{1}{p}$: 
\begin{align*}
\frac{1}{p}  &= 0 + [p]   \text{ gives edgepath } \left \langle \frac{1}{p} \right \rangle  \text{\pdash}\ \langle 0 \rangle,
\intertext{and} 
\frac{1}{p} &= \pm1 + \underbrace{[\pm 2, \pm 2, \ldots, \pm 2]}_{\text{$p-1$ times}} \text{ gives edgepath } \left \langle \frac{1}{p} \right \rangle \text{\pdash} \left \langle \frac{1}{p\pm1}\right \rangle\text{\pdash}\cdots\text{\pdash}\left \langle \mp 1 \right \rangle.   
\end{align*}
where it is a minus or a plus sign for the slope of each vertex for the second type of continued fraction expansion if $p$ is positive or negative, respectively.  

To show Theorem \ref{thm:slope2}, we exhibit edgepath systems satisfying conditions (E1)-(E4) in Definition \ref{defn:edgepath} corresponding to essential surfaces in the complement of $P(\frac{1}{r}, \frac{1}{s}, \frac{1}{t})$. We compute their boundary slopes and Euler characteristics using the methods of Section \ref{subsec:slope} and Section \ref{subsec:Euler}, respectively. For Conjecture \ref{conj:slopes}b, note that if an essential surface $S$ has boundary slope $x_j/y_j$ where $(x_j, y_j) = 1$, then $y_j$ is the minimum number of intersections of a boundary component of $S$ with a small meridian disc of $K$. Therefore, the number of sheets $m$ is given by $m = |\partial S|y_j,$ and we have 
\[ \frac{\chi(S)}{|\partial S|y_j} = \frac{\chi(S)}{m} \] as in the conjecture. Therefore, we need only to exhibit an essential surface for which
\[ \frac{\chi(S)}{m} = b_j. \]

\begin{proof} (of Theorem \ref{thm:slope2})\ 
Note that the boundary slope of a candidate surface $S$ corresponding to an edgepath system is given by $\tau(S) - \tau(S_0)$, where $S_0$ is a candidate surface that is a Seifert surface, see Section \ref{subsec:slope}. When all of $r,  s, t$ are odd, there is only one choice of edgepath system that will give us an orientable spanning surface \cite[Pg. 460]{HatcherOertel}. In this case, the edgepath system for $S_0$ is the following. 
\begin{itemize}
\item For $\frac{1}{r}$: $\langle \frac{1}{r} \rangle \text{\pdash} \langle 0 \rangle,$
\item For $\frac{1}{s}$:  $\langle \frac{1}{s} \rangle \text{\pdash} \langle 0 \rangle,$ 
\item For $\frac{1}{t}$:  $\langle \frac{1}{t} \rangle \text{\pdash} \langle 0 \rangle$. 
\end{itemize} 
Therefore, $\tau(S_0) = 2$. 

\begin{itemize}
\item[(1)] The case $2|r| < s, t$. 
\noindent \paragraph{\textbf{Boundary slope.}}
This will just be the same edgepath system as the Seifert surface and hence the boundary slope is $\tau(S) - \tau(S_0) = 0$. See Figure \ref{fig:state} for a picture. 

\begin{figure}[ht]
\includegraphics[scale=.2]{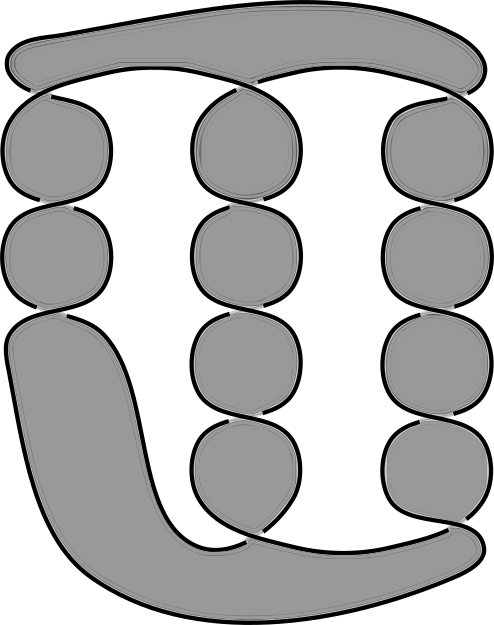}
\caption{\label{fig:state}An example for $P(-\frac{1}{3}, \frac{1}{5}, \frac{1}{5})$. The essential surface with boundary slope the Jones slope is the state surface obtained by taking the $B$-resolution for all crossings in the first twist region and the $A$-resolution for all twists in the second and the third region.}
\end{figure} 
\noindent \paragraph{\textbf{Euler characteristic.}}
It is clear that 
\begin{align*}
\frac{\chi(S)}{m} = -1.
\end{align*}

\item[(2)] The case $|r| > s$ or $|r| > t$.
\noindent \paragraph{\textbf{Boundary slope.}} We consider the following edgepath system.
\begin{itemize}
\item For $1/r$:$ \left \langle \frac{1}{r} \right \rangle \text{\pdash} \left \langle \frac{1}{r+1}  \right \rangle \text{\pdash} \cdots \text{\pdash} \left \langle -1 \right \rangle.$
\item For $1/s$: 
$\left \langle \frac{1}{s} \right \rangle \text{\pdash}\left \langle 0 \right \rangle. $
\item For $1/t$: 
$ \left \langle \frac{1}{t} \right \rangle \text{\pdash}\left \langle 0 \right \rangle. $
\end{itemize}
Condition (E4) requires that we set the $a, b$-coordinates for $\langle \frac{-1}{q+1} \rangle \text{\pdash} \langle \frac{-1}{q} \rangle$ for $0 < q \leq |r|$, and $\langle \frac{1}{s} \rangle \text{\pdash} \langle 0 \rangle$, and $\langle \frac{1}{t} \rangle \text{\pdash} \langle 0 \rangle$ are equal, and that the $c$-coordinates add up to zero. This is equivalent to setting the horizontal coordinates equal and summing the vertical coordinates to zero. We get the following equations:
\begin{align*}
& \frac{m(q-1)+k}{mq + k} = \frac{k'(s-1)}{m+k'(s-1)} =  \frac{k'(t-1)}{m+k'(t-1)} \notag \\ 
& \frac{-m}{mq+k} + \frac{k'}{m+k'(s-1)} + \frac{k''}{m+k''(t-1)} = 0.
\intertext{Recall that for the curve system represented by each point, the number $k$ represents the number of arcs coming into each puncture with one slope and $m-k$ represents the number of arcs coming into each puncture of a different slope. The number of sheets $m$ will be the same. We set $A = \frac{k}{m}, B = \frac{k'}{m}$, and $C = \frac{k''}{m}$ and solve.} 
B &= \frac{t-1}{-2+s+t}.
\intertext{Note the appearance of the quantity $\frac{t-1}{-2+s+t}$ also in the computation of the maximal degree of the colored Jones polynomial at the end of the proof of Theorem \ref{thm.JonesDeg}. It showed up as the $N$-dependent part of the maximum of the quadratic on the boundary of the summation range. 
To compute $\tau(S)$, note that the edges are all decreasing. We add up $A, B, C$, and the number of paths for $\tau(S)$.}
\tau(S)&= 2(-r-q-A +1- B +1- C).
\end{align*}
The boundary slope is then
\[\tau(S) -\tau(S_0) = \frac{2(1-st)}{-2+s+t} -2r.  \] 

\noindent \paragraph{\textbf{Euler characteristic}}
We will now compute the Euler characteristic for this representative  of the edgepath system. For each of  the edgepaths we first have $ 3\cdot 2m$ number of base disks with slopes the slopes of the tangles corresponding to $\{\gamma_i\}$, then we glue on saddles.  The sum total of the change in Euler characteristic after constructing the surface according to these local edgepaths is then \[-m \cdot (-r-q-1) - m(1-A+1-B+1-C).\] This also accounts for the contribution of the fractional last edge of each of the edgepaths.

Then we consider the contribution to the Euler characteristic from gluing these local candidate surfaces together, which in terms of $(a, b, c)$ coordinates will be \[ -2(2a+b) + b. \]  We use the third edgepath 
\[\left \langle \frac{1}{t} \right \rangle \text{\pdash}\ \frac{k''}{m}\left \langle \frac{1}{t} \right \rangle + \frac{m-k''}{m} \left \langle 0 \right \rangle \]
to compute $a$ and $b$ in terms of $r, s$, and $t$.
Adding everything together and dividing by the number of sheets, we have
\begin{align*}
\frac{\chi(S)}{m} &= 2+r.
\end{align*}  
Finally, the candidate surfaces corresponding to the two types of edgepath systems exhibited above are all essential by Theorem \ref{thm:esscriterion}, since the $r$-values are of the form $(r-1, s-1, t-1)$ or $(1, s-1, t-1)$ and it follows from our assumptions that $|r|, s, t > 2$. 
\end{itemize}
\end{proof}

\section{Further directions.} \label{sec:discussion}

For general Montesinos knots of arbitrary length $K(\frac{p_1}{q_1}, \ldots, \frac{p_n}{q_n})$, the techniques used in this paper will not easily apply due to computational complexity. As discussed in Section \ref{subsec:main}, we need only to consider the case where the first tangle is negative as the rest of the Montesinos knots will be adequate. In a forth-coming paper \cite{LV}, we will discuss possible extensions of Theorem \ref{thm:slope1} and Theorem \ref{thm:slope2} using different techniques from that of this paper. In particular, let $P(\frac{1}{r}, \frac{1}{s_1}, \ldots, \frac{1}{s_{n-1}})$ be a pretzel knot where $r < 0 < s_i$ are odd for $1\leq i \leq n-1$. We are able to show that if $2|r|< s_i$ for all $i$, then the Jones slope is matched by the boundary slope of a state surface. We are also able to obtain statements similar to case (2) of Theorem \ref{thm:slope1} when $|r| > s_i$ for some $1 \leq i \leq n-1$. It is more challenging to generalize the expression for the constant terms given in the theorem, since in this case their topological meaning is not yet clear. We hope to clarify this in the future.

\bibliographystyle{amsalpha}\bibliography{references}

\end{document}